\DeclareMathOperator*{\argmax}{arg max}
\DeclareMathOperator*{\argmin}{arg min}
\newtheorem{theorem}{Theorem}[section]
\newtheorem{proposition}[theorem]{Proposition}
\begin{document}
%

\conferenceinfo{ODDx$3$'15,} {August 10th, 2015, Sydney, AU.} 
\CopyrightYear{2015} 
\crdata{978-1-4503-3831-8}
\clubpenalty=10000 
\widowpenalty = 10000

\title{Abnormal Subspace Sparse PCA for Anomaly Detection and Interpretation}

\numberofauthors{3}
\author{
\alignauthor
        Xingyan Bin\\
       \affaddr{Tsinghua University}\\
       \affaddr{Beijing, China}\\
       \email{bxy13\\@mails.tsinghau.edu.cn}
\alignauthor
        Ying Zhao\titlenote{Corresponding author}\\
       \affaddr{Tsinghua University}\\
       \affaddr{Beijing, China}\\
       \email{yingz\\@mail.tsinghau.edu.cn}
\alignauthor
        Bilong Shen\\
       \affaddr{Tsinghua University}\\
       \affaddr{Beijing, China}\\
       \email{shenbl12\\@mails.tsinghau.edu.cn}
}
\maketitle

\begin{abstract}
The main shortage of principle component analysis (PCA) based anomaly detection models is their interpretability. In this paper, our goal is to propose an interpretable PCA-based model for anomaly detection and interpretation. The propose ASPCA model constructs principal components with sparse and orthogonal loading vectors to represent the abnormal subspace, and uses them to interpret detected anomalies. Our experiments on a synthetic dataset and two real world datasets showed that the proposed ASPCA models achieved comparable detection accuracies as the PCA model, and can provide interpretations for individual anomalies.

\end{abstract}

\keywords{Anomaly detection, PCA, Anomaly interpretation, sparsity, optimization}

\section{Introduction}

Principal Component Analysis (PCA) is one of the best-known statistical analysis techniques for detecting anomalies and has been applied to many kinds of data, such as network intrusion detection, failure detection in production systems, and so on \cite{Anomaly-Survey-09,XuWeiGoogle,Lakhina-2005-sigcomm,jiang2013family}. In these domains, pinpointing the sources of detected anomalies is also very important for real applications such as diagnosing failures and recovering systems/networks. Hence, for each detected anomaly, an ideal model should also be able to interpret the reasons of its detection, which we refer to as the problem of {\bf anomaly interpretation}.

Traditional PCA-based anomaly detection models are not suitable for anomaly interpretation \cite{XuWei-SOSP,PCA-Sensitivity}, as they judge whether a data instance is an anomaly or not based on the length of its projection on the abnormal subspace spanned by the less significant principal components, and there is no direct mapping between PCA's dimensionality-reduced subspace and the original feature space. Existing approaches \cite{XuWei-SOSP} added a separated interpretation step to solve this problem by using techniques such as decision trees. However such indirect interpretation often failed to reveal the true causes of the anomalies detected by PCA-based methods \cite{PCA-Sensitivity}. Another recent work \cite{jiang2013family} proposed the joint sparse PCA (JSPCA) model to identify a low-dimensional approximation of the abnormal subspace, so that {\it all} anomalies can be {\it localized} onto a small subset of original feature variables. However, for individual anomaly interpretation, especially anomalies of different types, we need a more accurate and direct way of interpretation.

This paper aims to design an interpretable PCA-based anomaly detection model. Our key observation is that if we manage to construct principal components (PCs) with {\it sparse} and {\it orthogonal} loading vectors to represent the abnormal subspace, a detected anomaly can be interpreted by identifying the set of such PCs on which the anomaly has large projection values. We propose interpretable {\bf Abnormal subspace sparse PCA (ASPCA)} models for anomaly detection and interpretation in this paper, and make the following two contributions.

First, we formulate two objective functions for ASPCA: one extracts the most significant sparse orthogonal PCs first,  and the other extracts the least significant sparse orthogonal PCs first, which prioritizes the sparsity of the abnormal subspace. To the best of our knowledge,  the proposed ASPCA models are the first PCA-based models that are suitable for individual anomaly detection and interpretation. Second, we propose an optimization method for ASPCA models with a semidefinite programming (SDP) relaxation step and a global sparsity optimization step. Our experiments on a synthetic dataset and two real world datasets showed that the proposed ASPCA models achieved comparable detection accuracies as the PCA model, and can provide interpretations for individual anomalies.  

The rest of this paper is organized as follows. Section~2 introduces the proposed ASPCA models.  Section~3 describes the optimization methods for ASPCA models. Section~4 presents a comprehensive experimental evaluation. Section~5 discusses the related work. Finally, Section~6 provides some concluding remarks.

\section{Related Work}
PCA is mostly known as a dimension reduction tool \cite{jolliffe2002principal}, but it is also widely used as an anomaly detection method \cite{dunia1997multi,Anomaly-Survey-09}.
Wei Xu \emph{et al.} used this technique to analyze logs and detect anomalies on a game server, Hadoop File System (HDFS) \cite{XuWei-SOSP}, and Google's production system \cite{XuWeiGoogle}. Ryohei Fujimaki \emph{et al.} used kernel PCA on the Spacecraft Anomaly Detection Problem \cite{kernelPCA-Space}.
People have implemented this technique on network intrusion detection \cite{Lakhina-2005-sigcomm,Lakhina-2004-sigcomm,jiang2013family,PCA-KDD99-2006}.
Anukool Lakhina \emph{et al.} applied this technique on the problem of network flood monitoring using PCA on the matrix of time and Origin-Destination pairs \cite{Lakhina-2005-sigcomm,Lakhina-2004-sigcomm}. Firstly, they used the volume of communication \cite{Lakhina-2004-sigcomm} in their model, and then they extent their model with the entropy of communication volumes and applied PCA with multiple subspace \cite{Lakhina-2005-sigcomm}. Ling Huang \emph{et al.} tried to design an online PCA-based detection method for scalability and communication efficiency \cite{huang2006network,INFOCOM-Distributed-PCA}.

One of the major disadvantages of PCA as a dimension reduction tool is its poor interpretability. Ian Jolliffe \emph{et al.} introduced the concept of sparse PCA which adds a constraint on the sparsity of loading vectors \cite{SPCA-2003}. Since, various methods solving the sparse PCA problem were proposed in the literature, for example \cite{SPCA-2006} and \cite{SPCA-SDP}. Hui Zou \emph{et al.} transformed the sparse PCA problem to a regression-type problem with an elastic net regularization, which could be solved by an alternating minimization scheme \cite{SPCA-2006}. Alexandre d'Aspremont \emph{et al.} proposed a semi-definite programming (SDP) relaxation to the sparse PCA optimization problem \cite{SPCA-SDP}.

PCA-based anomaly detection methods also suffer from the shortage of interpretability \cite{PCA-Sensitivity,XuWei-SOSP}. Ruoyi Jiang \emph{et al.} introduced the joint sparse PCA method for anomaly localization inspired by sparse PCA \cite{jiang2011JSPCA,jiang2013family} , and they followed the alternating minimization framework \cite{SPCA-2006} to solve the optimization problem \cite{jiang2013family}. Wei Xu \emph{et al.} also tried to interpret the results returned by a PCA anomaly detection model with decision trees trained by the data labeled by the PCA model \cite{XuWei-SOSP}, which as shown in our experimental results, can be misleading and fail to reveal the true reason behind the PCA model.
\section{PCA for Anomaly Detection and Interpretation}
\subsection{Notations}
Bold uppercase letters such as ${\bf X}$ denote a matrix and bold lowercase letters such as ${\bf x}$ denote a column vector. Greek letters such as $\lambda,\mu$ are coefficients. $||{\bf X}||_F$ is the Frobenius norm of ${\bf X}$, and $||{\bf X}||_{1,1}$ is the $L_{1,1}$ norm of ${\bf X}$ as $||{\bf X}||_{1,1} = {\bf 1}|{\bf X}|{\bf 1}^T$.

A dataset is represented as an $n \times p$ data matrix ${\bf D}$, where each row vector corresponds to a $p$-dimensional data instance, and each column vector corresponds to a feature variable. ${\bf A} = {\bf D}^T{\bf D}$ is ${\bf D}$'s covariance matrix. $Tr({\bf A})$ represents the trace of matrix ${\bf A}$. $Card({\bf A})$ denotes the cardinality (number of non-zero elements) of matrix ${\bf A}$. ${\bf I}$ is the identity matrix. $\mathbb{S}^p$ is the set of all symmetric semidefinite matrices in $\mathbb{R}^{p \times p}$.

\subsection{PCA for Anomaly Detection}
Principal Component Analysis (PCA) is a dimensionality-reduction technique that captures the highest variance of a multi-dimensional dataset in a lower dimensional subspace defined by a set of orthogonal eigen vectors.  Given a $p$-dimensional dataset, a detection model can be constructed by forming a ``normal subspace" (defined by the first $k$ principal components returned by PCA) and an ``abnormal subspace" (the remaining subspace by removing the normal subspace). Since the normal subspace captures the highest variance of the dataset, PCA-based detection methods assume that this $k$-subspace corresponds to the normal trends of the dataset, and all normal data tends to have almost zero length projection on the abnormal subspace. Therefore, given a $p$-dimensional data, the model can detect whether it is a anomaly or not based on whether it is primarily expressed by the normal or abnormal subspace \cite{PCA-Sensitivity}.

More formally, let ${\bf V}_1=({\bf v}_1,\cdots,{\bf v}_k)$ be the normal subspace defined by the first $k$ principal components with ${\bf v}_1,\cdots,$ ${\bf v}_k$ being the orthogonal loading vectors, and ${\bf V}_2=({\bf v}_{k+1}, \cdots,$ ${\bf v}_p)$ be the abnormal subspace defined by the remaining $p-k$ principal components with ${\bf v}_{k+1},\cdots,{\bf v}_p$ being the orthogonal loading vectors of these PCs. Given a $p$-dimensional data $y$, its residual $\hat{{\bf y}}$ is defined as:

\begin{equation}
\hat{{\bf y}} = {\bf y}-{\bf V}_1{\bf V}_1^T{\bf y}.
\end{equation}

The squared length of ${\bf \hat{y}}$, called the squared prediction error (SPE),  is the metric to indicate whether ${\bf y}$ is an anomaly or not. The larger SPE is, the more likely ${\bf y}$ is an anomaly.

\subsection{Anomaly Interpretation}

When the SPE score of a given instance ${\bf y}$ is over a predefined threshold, ${\bf y}$ is considered as an anomaly. It is then important to understand where the abnormality of ${\bf y}$ comes from, {\it i.e.}, what anomalous feature behaviors of ${\bf y}$ are more responsible for distinguishing ${\bf y}$ from normal data. We call this problem as {\bf Anomaly Interpretation}. The anomaly interpretation for PCA is difficult, as there is no direct mapping between PCA's dimensionality-reduced subspace and the original feature space for anomaly \cite{PCA-Sensitivity}. In other words, the length of $\hat{{\bf y}}$ can be used to detect anomaly, whereas interpreting $\hat{{\bf y}}$ directly is meaningless. 

Given the normal subspace ${\bf V}_1$ and abnormal subspace ${\bf V}_2$, we can rewrite $\hat{{\bf y}}$ as follows:
\begin{equation}
\hat{\bf{y}} = {\bf y}-{\bf V}_1{\bf V}_1^T{\bf y} = {\bf V}_2{\bf V}_2^T{\bf y}.
\end{equation}
 
To design an interpretable PCA-based anomaly detection model, we have the following proposition. 

\begin{proposition}
Given ${\bf V}_2=({\bf v}_{k+1}, \cdots, {\bf v}_p)$, where \\ ${\bf v}_{k+1},\cdots,{\bf v}_p$ are orthogonal loading vectors, SPE can be expressed by
\begin{equation}
SPE= \hat{{\bf y}}^T\hat{{\bf y}} = \sum_{i=k+1}^{p}({\bf v}_i^T {\bf y})^2.
\end{equation}
\end{proposition}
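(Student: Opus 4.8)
The plan is to combine the two facts already on the table: the residual rewriting $\hat{\bf y} = {\bf V}_2{\bf V}_2^T{\bf y}$ (equation~(2) in the text), and the orthonormality of the loading vectors collected in ${\bf V}_2$. First I would record that, since ${\bf v}_{k+1},\dots,{\bf v}_p$ are the unit-norm orthogonal loading vectors of distinct principal components, the matrix ${\bf V}_2$ has orthonormal columns, i.e. ${\bf V}_2^T{\bf V}_2={\bf I}$. It is worth noting that ${\bf V}_2{\bf V}_2^T$ is merely the orthogonal projector onto the abnormal subspace (not the identity, since $p-k<p$); it is the inner product ${\bf V}_2^T{\bf V}_2$ that collapses to ${\bf I}$, and that is exactly what the argument exploits.

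Next I would expand the quadratic form directly:
\begin{equation}
SPE = \hat{\bf y}^T\hat{\bf y} = ({\bf V}_2{\bf V}_2^T{\bf y})^T({\bf V}_2{\bf V}_2^T{\bf y}) = {\bf y}^T{\bf V}_2({\bf V}_2^T{\bf V}_2){\bf V}_2^T{\bf y} = {\bf y}^T{\bf V}_2{\bf V}_2^T{\bf y},
\end{equation}
where the last equality uses ${\bf V}_2^T{\bf V}_2={\bf I}$. Writing ${\bf z}={\bf V}_2^T{\bf y}$, whose $i$-th coordinate is precisely ${\bf v}_i^T{\bf y}$, the right-hand side is ${\bf z}^T{\bf z}=\sum_{i=k+1}^p({\bf v}_i^T{\bf y})^2$, which is the claimed identity. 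An equivalent and perhaps more transparent route is to note that $\{{\bf v}_{k+1},\dots,{\bf v}_p\}$ is an orthonormal basis of the abnormal subspace, so $\hat{\bf y}={\bf V}_2{\bf V}_2^T{\bf y}=\sum_{i=k+1}^p({\bf v}_i^T{\bf y}){\bf v}_i$ is the orthogonal expansion of $\hat{\bf y}$ in that basis; the Pythagorean identity then gives $\|\hat{\bf y}\|^2=\sum_{i=k+1}^p({\bf v}_i^T{\bf y})^2$ at once.

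There is no genuine obstacle here — the proposition is an immediate consequence of the projection identity together with orthonormality. The one point I would be careful to state explicitly is the normalization convention: the formula assumes each ${\bf v}_i$ has unit length (the standard choice for PCA loading vectors), and not merely that the vectors are mutually orthogonal. Mutual orthogonality alone already kills the cross terms ${\bf v}_i^T{\bf v}_j$ for $i\neq j$, but the diagonal terms would then carry factors $\|{\bf v}_i\|^2$; flagging the unit-norm assumption closes that gap cleanly.
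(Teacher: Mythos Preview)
Your proof is correct and follows essentially the same route as the paper: substitute $\hat{\bf y}={\bf V}_2{\bf V}_2^T{\bf y}$ into $\hat{\bf y}^T\hat{\bf y}$, collapse the inner ${\bf V}_2^T{\bf V}_2$ to ${\bf I}$ by orthonormality, and read off $\|{\bf V}_2^T{\bf y}\|^2=\sum_{i=k+1}^p({\bf v}_i^T{\bf y})^2$. Your added remarks on the projector interpretation and the unit-norm caveat are helpful clarifications but do not change the argument.
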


\begin{proof}
\begin{equation}
\label{equation:SPE}
\begin{split}
SPE &= \hat{{\bf y}}^T\hat{{\bf y}} = {\bf y}^T{\bf V}_2 {\bf V}_2^T {\bf V}_2 {\bf V}_2^T {\bf y}\\
       & = ({\bf y}^T{\bf V}_2)({\bf V}_2^T {\bf V}_2)({\bf V}_2^T {\bf y})= ({\bf V}_2^T {\bf y})^T({\bf V}_2^T {\bf y})\\
       & = \sum_{i=k+1}^{p}({\bf v}_i^T {\bf y})^2.
\end{split}
\end{equation}
\end{proof}

\begin{figure}
	\centering
	\subfloat[Normal Instances of the Synthetic Data]
	{
		\includegraphics[width=80mm]{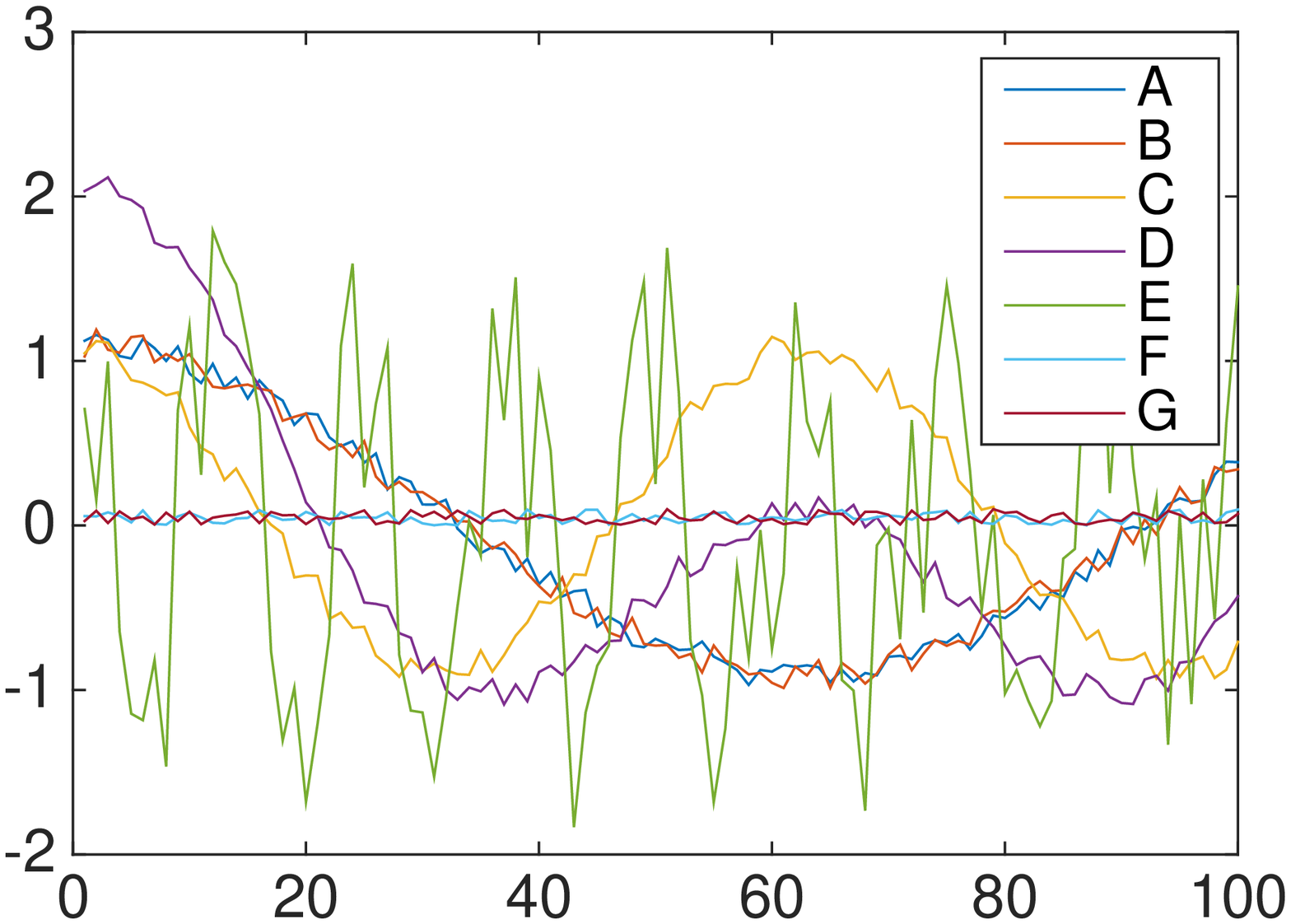}
		\label{fig:synthesized_data}
	}
    \\
	\subfloat[PCA]{
		\includegraphics[height=30mm]{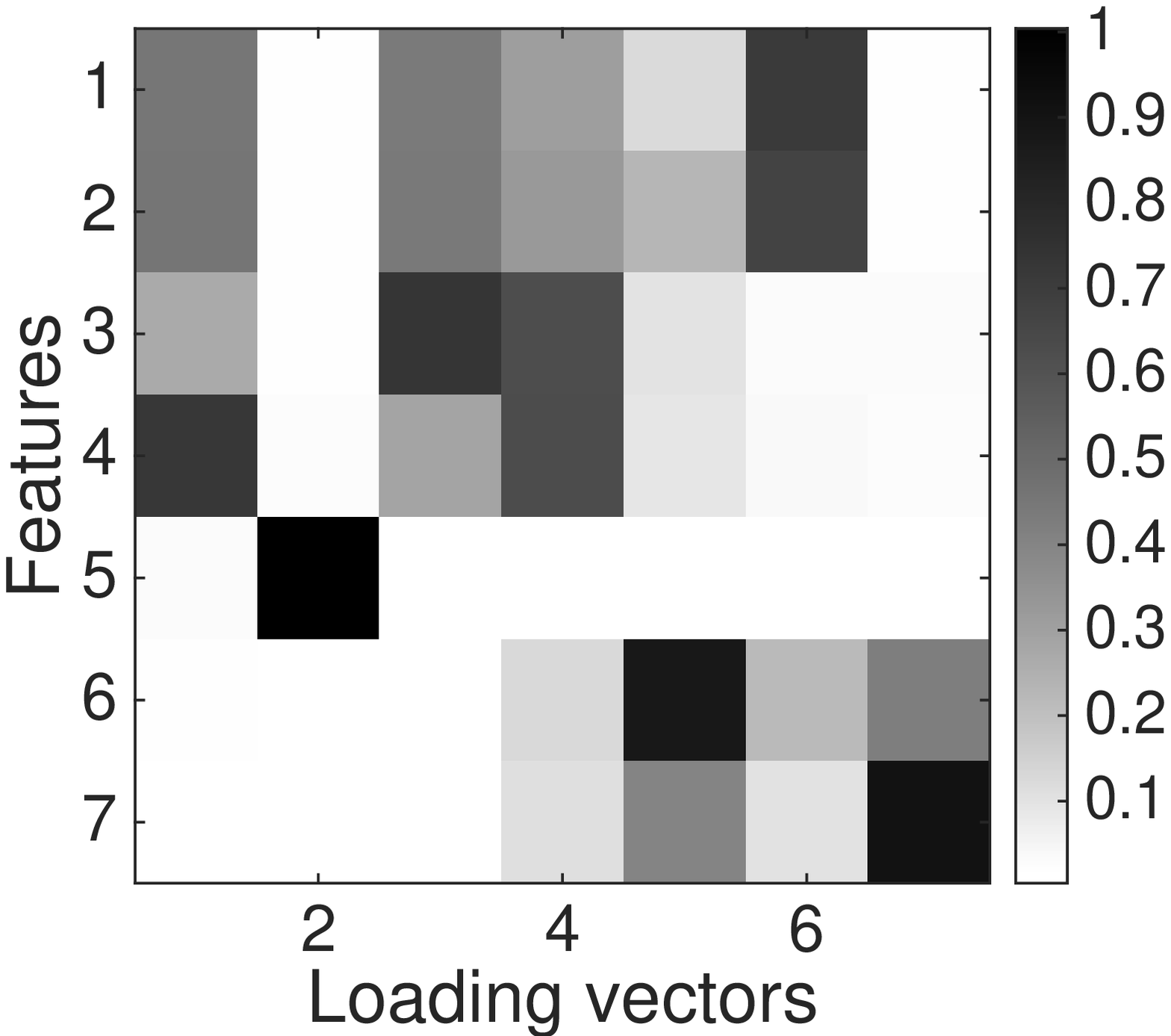}
		\label{fig:pcs:PCA}
	}
	\subfloat[ASPCA]{
		\includegraphics[height=30mm]{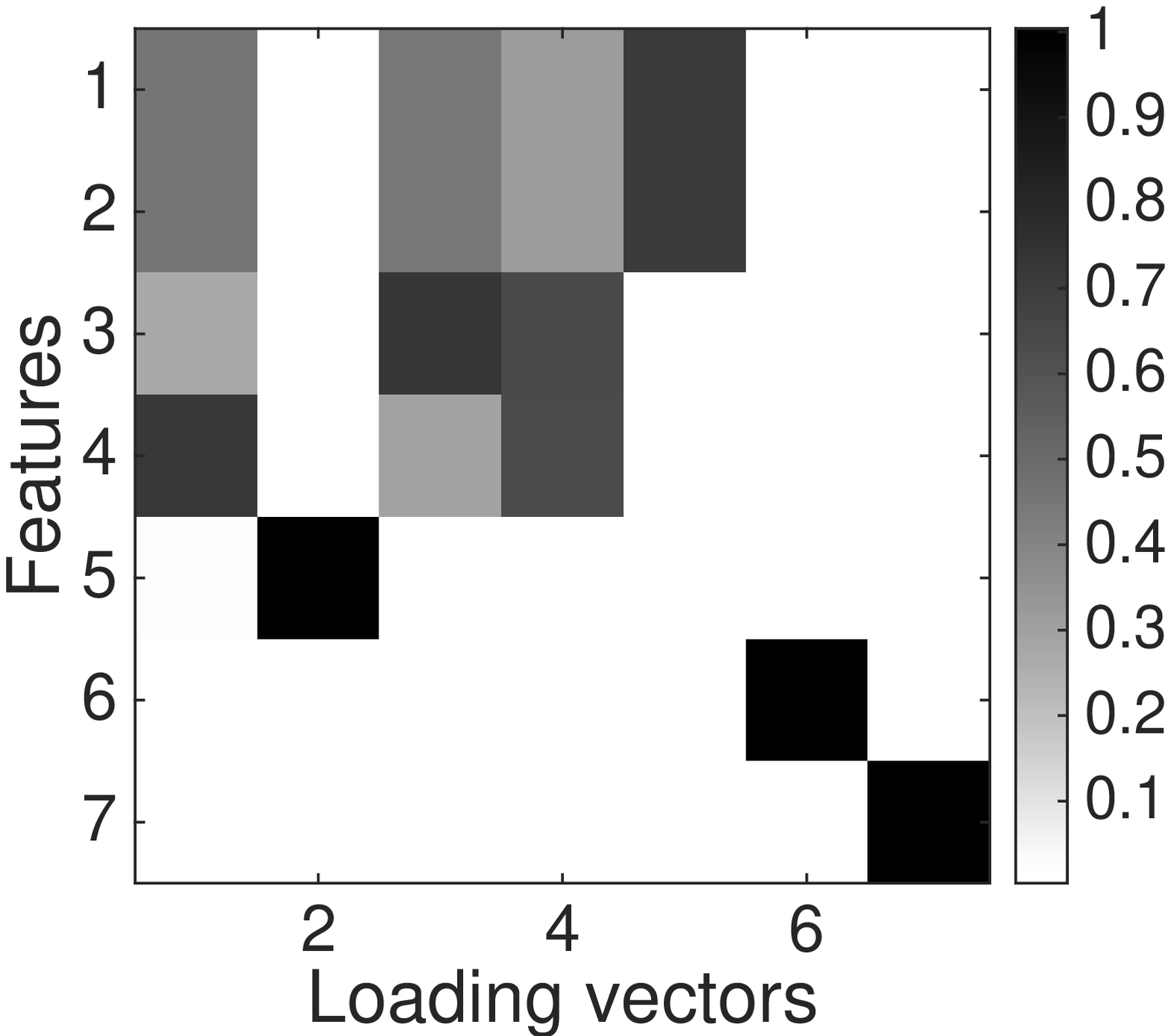}
		 \label{fig:pcs:ASPCA}
	}
\caption{Synthetic data and loading matrices obtained by PCA and ASPCA}
\label{fig:mappingmatrix}
\end{figure}

In other words, SPE is equal to the square sum of ${\bf y}$' scalar projection on each abnormal PCs, so that we can identify the set of PCs that are responsible for the abnormality indicated by high projection values. Unfortunately, these PCs are still difficult to interpret, since each abnormal PC is complicated as it is a linear combination of all feature variables. To make them interpretable, we have to make these abnormal PCs sparse, \emph{i.e.}, each represented by a few feature variables. Hence, our key observation is that if we manage to extract PCs with {\bf sparse} and {\bf orthogonal} loading vectors to represent abnormal subspace, these loading vectors can be used to detect and interpret anomalies. The orthogonality guarantees that Eqn.~\ref{equation:SPE} holds, so that the abnormality can be translated to high projection values on a set of abnormal PCs, while the sparsity guarantees that these abnormal PCs are interpretable.  We call the above method as the {\bf Abnormal Subspace Sparse PCA (ASPCA)} method. Now we use an example to illustrate this idea.

We synthesized a dataset with 500 normal records and 15 anomalies (first 100 normal records are shown in Figure~\ref{fig:synthesized_data}). Each data record has 7 features named from $A$ to $G$, and the normal records were generated with four patterns, $A \approx B$, $D \approx C + A$, $F \approx 0$, and $G \approx 0$.  The anomalies were generated as three categories by breaking the first three patterns, respectively. The loading matrix of PCs obtained by PCA is shown in Figure~\ref{fig:pcs:PCA}, where the last four PCs can be used to detect anomalies but difficult to interpret. Now, if we can make the loading vectors of last four PCs {\bf sparse} and {\bf orthogonal} as shown in Figure~\ref{fig:pcs:ASPCA}, they can be used to detect and interpret anomalies simultaneously. 
Now, the interpretation of a detected anomaly can be conducted in two steps. First, we can identify the set of projections that contribute the most for a high SPE score according to Eqn.~\ref{equation:SPE}. Then, we can interpret these projections one by one, by identifying which original feature variables are responsible for each projection, and how each projection triggers a high SPE score.

\begin{figure}
	\centering
	\includegraphics[height=30mm]{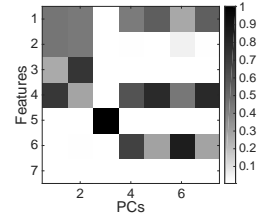}
\caption{Loading matrix obtained by JSPCA}
\label{fig:loading matrix:synthetic}
\end{figure}

Recently, Jiang {\it et al.} \cite{jiang2013family} proposed a joint sparse PCA (JSPCA) model to achieve a sparse representation of the abnormal subspace too. The main idea was to identify a low-dimensional approximation of the abnormal subspace using a subset of feature variables, where all abnormal PCs are represented by the same subset of feature variables as shown in Figure~\ref{fig:loading matrix:synthetic}. Although JSPCA can identify the set of features that distinguish the anomalies, it has two limitations that fail to meet our goals. First, the features identified by JSPCA are optimized for all anomalies as a whole. In particular, if anomalies are of different types, which is a common case for domains such as network intrusion detection or system failure detection, they should be interpreted by different sets of features inherently. As an unsupervised method, JSPCA cannot assume that anomalies in the dataset are of the same type, and cannot handle them well if they indeed are of different types. Second, JSPCA can only identify the important features for anomaly detection, but no direct interpretation as why anomalies are detected.  

\subsection{Abnormal Subspace Sparse PCA}

Now we need to formulate the objective function of the Abnormal Subspace Sparse PCA (ASPCA) problem. The recently studied sparse PCA framework \cite{SPCA-SDP} adds a sparsity constraint on the principal components (PCs). However, we cannot use this framework directly to solve our problem. The main reason is that the sparse PCA framework usually does not enforce orthogonality on the resultant sparse PCs.  Consequently, the resultant sparse PCs cannot be used to define the normal and abnormal subspaces, as the abnormal PCs are not the orthogonal complement of the normal PCs. 

By enforcing orthogonality, sparse PCA can be used to solve our ASPCA problem, which we denote as {\it forward} ASPCA (shorted as ASPCA-F).
Given a covariance matrix ${\bf A}$ and a sparsity constraint constant $k$, for each $i = 1,...,p$, ASPCA-F tries to solve:
\begin{equation}
\label{equation:typical_MSPCA-F}
\begin{split}
& \argmax_{{\bf v}_i}\ {\bf v}_i^T{\bf A}{\bf v}_i\\
& s.t.\ {\bf v}_i^T{\bf v}_i=1, \ {\bf v}_i^T{\bf v}_j=0\ \forall 1\leq j<i, Card({\bf v}_i)\leq k.
 \end{split}
\end{equation}

The last $d$ loading vectors obtained by solving Eqn.~\ref{equation:typical_MSPCA-F} are used for detecting and interpreting anomalies. 

One of the drawbacks of the ASPCA-F framework is that the last abnormal PCs tend to have poor sparsity.  To sovle this problem, we propose a {\it Backward} ASPCA framework (shorted as ASPCA-B) that extracts the least significant orthogonal PCs first, which prioritizes the optimization of the sparsity of the abnormal subspace. To see how it works, we first show that the process of standard PCA can be reversed, so that eigen vectors with smaller eigen values are extracted first by the following proposition.

\begin{proposition}
 Given a covariance matrix ${\bf A} = {\bf D}^T{\bf D}$, if we have already extracted the eigen vectors ${\bf v}_{k+1},{\bf v}_{k+2},....,{\bf v}_n$ with the $n-k-1$ smallest eigen values, and remaining eigen vectors are ${\bf v}_1, {\bf v}_2,....,{\bf v}_k$ with eigen values $\lambda_1 \geq \lambda_2 \geq ... \geq \lambda_k$ of $A$, the solution of Eqn. \ref{equation:proposition:init} is the eigen vector with the eigen value $\lambda_k$.
\begin{equation}
\begin{split}
 & \argmin_{{\bf v}}\ {\bf v}^T{\bf A}{\bf v}\\
 & s.t. {\bf v}^T{\bf v} =1,\ {\bf v}^T{\bf v}_i=0\ \forall k<i\leq n
\end{split}
\label{equation:proposition:init}
\end{equation}
\end{proposition}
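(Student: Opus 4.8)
The plan is to reduce the statement to the classical Rayleigh--Ritz characterization of eigenvalues, using that ${\bf A}={\bf D}^T{\bf D}$ is symmetric positive semidefinite (equivalently ${\bf A}\in\mathbb{S}^p$) and hence admits an orthonormal eigenbasis.

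First I would invoke the spectral theorem: there is an orthonormal basis of eigenvectors ${\bf v}_1,\dots,{\bf v}_n$ of ${\bf A}$ with real nonnegative eigenvalues, and by hypothesis we take ${\bf v}_{k+1},\dots,{\bf v}_n$ to be those realizing the smallest eigenvalues and ${\bf v}_1,\dots,{\bf v}_k$ the remaining ones, with $\lambda_1\ge\lambda_2\ge\cdots\ge\lambda_k$. Next I would describe the feasible set of Eqn.~\ref{equation:proposition:init}: since the ${\bf v}_i$ are an orthonormal basis, the constraints ${\bf v}^T{\bf v}_i=0$ for $k<i\le n$ force ${\bf v}$ to have no component along ${\bf v}_{k+1},\dots,{\bf v}_n$, i.e.\ ${\bf v}\in U:=\mathrm{span}\{{\bf v}_1,\dots,{\bf v}_k\}$; combined with ${\bf v}^T{\bf v}=1$, the feasible set is precisely the unit sphere of $U$, and conversely every unit vector of $U$ is feasible.

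Then I would write an arbitrary feasible vector as ${\bf v}=\sum_{j=1}^{k}c_j{\bf v}_j$ with $\sum_{j=1}^{k}c_j^2=1$ and use ${\bf A}{\bf v}_j=\lambda_j{\bf v}_j$ together with orthonormality to get
\[
{\bf v}^T{\bf A}{\bf v}=\sum_{j=1}^{k}\lambda_j c_j^2\ \ge\ \lambda_k\sum_{j=1}^{k}c_j^2=\lambda_k ,
\]
the inequality holding because $\lambda_j\ge\lambda_k$ for all $j\le k$. As this lower bound is attained at the feasible point ${\bf v}={\bf v}_k$, where ${\bf v}_k^T{\bf A}{\bf v}_k=\lambda_k$, the vector ${\bf v}_k$ solves Eqn.~\ref{equation:proposition:init}.

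I do not expect a real obstacle; this is routine linear algebra. The one point deserving a remark is uniqueness of the minimizer: if $\lambda_k$ has multiplicity greater than one, any unit vector in the intersection of $U$ with the $\lambda_k$-eigenspace is equally optimal, so the conclusion should be read as ``an optimal solution is an eigenvector with eigenvalue $\lambda_k$'', and it is unique up to sign under the usual assumption of distinct eigenvalues. Iterating this proposition (peeling off the current least-significant eigenvector and repeating) then reverses the standard PCA extraction order, which is exactly what the ASPCA-B framework requires.
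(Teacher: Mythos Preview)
Your argument is correct and follows essentially the same route as the paper: expand a feasible ${\bf v}$ in the orthonormal eigenbasis, use the constraints to restrict to $\mathrm{span}\{{\bf v}_1,\dots,{\bf v}_k\}$, compute ${\bf v}^T{\bf A}{\bf v}=\sum_j \lambda_j c_j^2\ge \lambda_k$, and note that equality is attained at ${\bf v}_k$. The only addition in the paper's proof is an explicit equality-case analysis showing that \emph{every} minimizer $\hat{\bf v}$ satisfies $\hat\alpha_i\neq 0$ only if $\lambda_i=\lambda_k$, hence is itself an eigenvector for $\lambda_k$; you gesture at this in your multiplicity remark but do not spell it out.
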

\begin{proof}
We project ${\bf v}$ on $({\bf v}_1,...,{\bf v}_n)$, ${\bf v} = \sum_{i=1}^{n} \alpha_i {\bf v}_i = \sum_{i=1}^{k} \alpha_i {\bf v}_i$, where $\alpha_i = {\bf v}^T{\bf v}_i$. 
As ${\bf v}_i^T{\bf v}_j=0, i \neq j$,  we have ${\bf v}^T{\bf v} = \sum_{i=1}^{k} \alpha_i^2 = 1$. Then,
\begin{equation*}
\begin{split}
 & {\bf v}^T{\bf A}{\bf v}\\
 =& (\sum_{i=1}^{k} \alpha_i {\bf v_i})(\sum_{i=1}^{k} \alpha_i {\bf A}{\bf v}_i)
 =(\sum_{i=1}^{k} \alpha_i {\bf v}_i)(\sum_{i=1}^{k} \alpha_i \lambda_i  {\bf v}_i)\\
 =& \sum_{i=1}^{k} \alpha_i^2 \lambda_i {\bf v}_i^T {\bf v}_i + \sum_{i=1}^{k} \sum_{j=1,j \neq i}^{k} \alpha_i \alpha_j \lambda_j {\bf v}_i^T {\bf v}_j\\
 =& \sum_{i=1}^{k} \alpha_i^2 \lambda_i\\
\end{split}
\end{equation*}

As $\lambda_i \geq \lambda_k, i\leq k$, we know $ {\bf v}^T{\bf A}{\bf v} = \sum_{i=1}^{k} \alpha_i^2 \lambda_i \geq \sum_{i=1}^{k} \alpha_i^2 \lambda_k = \lambda_k $. And we know $\min_{\bf v}{{\bf v}^T{\bf A}{\bf v}} \leq {\bf v}_k^T{\bf A}{\bf v}_k = \lambda_k$, so $\min_{\bf v}{{\bf v}^T{\bf A}{\bf v}} = \lambda_k$.

With the optimum $\hat{{\bf v}} = \sum_{i=1}^{k} \hat{\alpha}_i {\bf v}_i$, we have:
\begin{equation*}
\begin{split}
\lambda_k - \sum_{i=1}^{k} \hat{\alpha}_i^2 \lambda_i = \sum_{i=1}^{k} \hat{\alpha}_i^2 (\lambda_k - \lambda_i) = 0 \\
\lambda_k - \lambda_i\leq 0, i<k
\end{split}
\end{equation*}
  So, we know that $\hat{\alpha}_i \neq 0$ only if $\lambda_i = \lambda_k$. Hence, $\hat{{\bf v}}$ is a linear combination of the eigen vectors with eigen value $\lambda_k$, and $\hat{{\bf v}}$  must be an eigen vector with eigen value $\lambda_k$ too.

\end{proof}

Obviously, the proposition also holds for $k=n$. Together we see that using Eqn.~\ref{equation:proposition:init}, eigen vectors can be calculated in an increasing order of eigen values. Now, we add a sparsity constraint to Eqn.~\ref{equation:proposition:init} and form the objective function for our ASPCA-B framework as follows.

Given a covariance matrix ${\bf A}$ and a sparsity constraint constant $k$, for each $i = 1,...,d$, our ASPCA-B framework tries to solve:
\begin{equation}
\label{equation:typical_MSPCA-B}
\begin{split}
& \argmin_{{\bf v}_i}\ {\bf v}_i^T{\bf A}{\bf v}_i\\
& s.t.\ {\bf v}_i^T{\bf v}_i=1, \ {\bf v}_i^T{\bf v}_j=0\ \forall 1\leq j<i, Card({\bf v}_i)\leq k.
 \end{split}
\end{equation}

When we extract $d$ loading vectors $\bf{v}_1...\bf{v}_d$ to span a subspace ${\bf S}_a$,  we make sure that the orthogonal complement of ${\bf S}_a$ has major variance for describing the normal patterns in the dataset, so that ${\bf S}_a$ is the abnormal subspace and the resultant $d$ sparse principal components can be used to detect and interpret anomalies.

\section{Methodology}

We derive a solution for Eqn.~\ref{equation:typical_MSPCA-F} following the semidefinite programming (SDP) relaxation framework proposed by \cite{SPCA-SDP}. We then modify it to solve Eqn.~\ref{equation:typical_MSPCA-B}. Finally,we further optimize the sparsity of all the obtained abnormal components with the constraint of spanning the same subspace using the alternating minimization scheme inspired by \cite{SPCA-2006}. 

\paragraph{\bf Solving ASPCA-F with SDP Relaxation}
We first transform Eqn. \ref{equation:typical_MSPCA-F} without the orthogonality constraint ${\bf v}_j^T {\bf v}_i=0, \forall 1\leq j<i$ to Eqn.\ref{equation:aspca_sdp} through a SDP relaxation.

\begin{equation}
\label{equation:aspca_sdp}
\begin{split}
 & \argmax_{{\bf X}_i \in \mathbb{S}^p}\ Tr({\bf A}{\bf X}_i)\\
 & s.t.\ {\bf X}_i\succeq0, rank({\bf X}_i)=1, \ Tr({\bf X}_i)=1, Card({\bf X}_i) < k^2\\
\end{split}
\end{equation}

where ${\bf X}_i$ is a positive semi-definitive matrix with the constraint $rank({\bf X}_i)=1$, which can be uniquely decomposed as ${\bf X}_i={\bf v}_i{\bf v}_i^T$. With ${\bf X}_i={\bf v}_i{\bf v}_i^T$, $Tr({\bf X}_i)=1$ is equivalent to ${\bf v}_i^T{\bf v}_i=1$, $Card({\bf X}_i)\leq k^2$ is equivalent to $Card({\bf v}_i)\leq k$, and we have ${\bf v}_i^T{\bf A}{\bf v}_i = Tr({\bf A}({\bf v}_i{\bf v}_i^T))=Tr({\bf A}{\bf X}_i)$.

Now let ${\bf V}_i = ({\bf v}_1, {\bf v}_2, ... ,{\bf v}_i)$ and ${\bf R}_i = {\bf V}_i{\bf V}_i^T$, the orthogonality constraint ${\bf v}_j^T {\bf v}_i=0, \forall 1\leq j<i$ is equivalent to $||{\bf V}_{i-1}^T{\bf v}_i||_2^2 = 0 $, and $ ||{\bf V}_{i-1}^T{\bf v}_i||_2^2 = {\bf v}_i^T{\bf V}_{i-1}{\bf V}_{i-1}^T{\bf v}_i=Tr({\bf R}_{i-1}{\bf X}_i)=0$. Similarly as in \cite{SPCA-SDP}, we relax $Card({\bf X}_i) < k^2$ to $||{\bf X}_i||_{1,1}<k$ and move it to the objective function with a coefficient $\lambda$. Finally, the non-convex constraint $rank({\bf X}_i)=1$ is dropped, and we have an objective function that can be solved by semidefinite programming (SDP) as in Eqn. \ref{equation:aspca_f_sdp_final}.

\begin{equation}
\label{equation:aspca_f_sdp_final}
\begin{split}
& \argmax_{{\bf X}_i \in \mathbb{S}^p}\ Tr({\bf A}{\bf X}_i)-\lambda ||{\bf X}_i||_{1,1}\\
& s.t.\ {\bf X}_i\succeq0,\ Tr({\bf X}_i)=1, Tr({\bf R}_{i-1}{\bf X}_i)=0\\
\end{split}
\end{equation}

As $rank({\bf X}_i)$ might not be $1$, so we use the dominant eigenvector of ${\bf X}_i$ as the approximate solution for ${\bf v}_i$.

\paragraph{\bf Solving ASPCA-B with SDP Relaxation}

To solve Eqn.~\ref{equation:typical_MSPCA-F}, following the same steps above, we can get Eqn.~\ref{equation:aspca_b_sdp_final}, which is still a convex programming problem and can be solved by semidefinite programming (SDP).
\begin{equation}
\label{equation:aspca_b_sdp_final}
\begin{split}
& \argmin_{{\bf X}_i \in \mathbb{S}^p}\ Tr({\bf A}{\bf X}_i)+\lambda ||{\bf X}_i||_{1,1}\\
& s.t.\ {\bf X}_i\succeq0,\ Tr({\bf X}_i)=1, Tr({\bf R}_{i-1}{\bf X}_i)=0\\
\end{split}
\end{equation}

\paragraph{\bf Global Sparsity Optimization}
Let ${\bf V}=({\bf v}_1,...,{\bf v}_d)$ be the set of sparse loading vectors extracted by solving 
Eqn.~\ref{equation:aspca_f_sdp_final} or Eqn.~\ref{equation:aspca_b_sdp_final}, which is also a set of basis vectors spanning the abnormal subspace. Notice that for any set of basis vectors ${{\bf c}_1,...,{\bf c}_d}$ spanning the same subspace, we have
\begin{equation}
SPE= \sum_{i=1}^{d}({\bf v}_i^T {\bf y})^2 = \sum_{i=1}^{d}({\bf c}_i^T {\bf y})^2
\end{equation}
Hence, we can employ a global sparsity optimization step to make the basis vectors of the same abnormal subspace sparser. To this end, we form the following optimization problem on an orthogonal transformation matrix $X$, 

\begin{equation}
\begin{split}
&\argmin_{{\bf X}}\ ||{\bf V}{\bf X}||_{1,1}\\
& s.t.\ {\bf X}^T{\bf X}=I
\end{split}
\end{equation}

Let ${\bf C}={\bf V}{\bf X}$, we transform this problem to the following regression problem,

\begin{equation}
\begin{split}
&\argmin_{{\bf X},{\bf C}}\ ||{\bf V}-{\bf C}{\bf X}^T||_F+\mu||{\bf C}||_{1,1}\\
&s.t.\ {\bf X}^T{\bf X}={\bf I}
\end{split}
\label{equation:step2}
\end{equation}

Eqn. \ref{equation:step2} can be solved by using the alternating minimization scheme as in \cite{SPCA-2006}, with initial ${\bf X}$ being an identity matrix. Initially, we set $\mu = ||{\bf V}||_F/||{\bf V}{\bf X}||_{1,1}$ to emphasize more on the sparsity objective, and gradually degrade $\mu$ to a small value to ensure ${\bf C}$ spanning the same subspace as ${\bf V}$ through the last iterations.

Adding the global sparsity optimization step to ASPCA-F and ASPCA-B, we have two new models ASPCA-FG and ASPCA-BG, respectively. Algorithms 1 and 2 summarize the entire optimization process, where ${\bf A}$ is the covariance matrix of the input dataset, $d$ is the number of sparse principal components extracted from the abnormal subspace, $max\_iter$ is the number of iterations for the global sparsity optimization, and the output loading matrix {\bf V} contains $d$ orthogonal and sparse loading vectors for detecting and interpreting anomalies.

\renewcommand{\algorithmicrequire}{\textbf{Input:}}
\renewcommand{\algorithmicensure}{\textbf{Output:}}
\begin{algorithm}
\begin{algorithmic}[1]
    \caption{Forward Abnormal Subspace Sparse PCA with Global Optimization (ASPCA-FG)}
    \REQUIRE  ${\bf A},\ d,\ \lambda$, and $max\_iter$
    \ENSURE ${\bf V}$
    \FOR{$i=1$ to $p$}
	\STATE ${\bf V}_{i-1} \gets ({\bf v}_1,{\bf v}_2...{\bf v}_{i-1})$;
 	\STATE ${\bf R}_{i-1} \gets {\bf V}_{i-1}{\bf V}_{i-1}^T$;
	\STATE Optimize ${\bf v}_i$ with given ${\bf A},\ {\bf R}_{i-1}$ according to Eqn.~\ref{equation:aspca_f_sdp_final} using SDP;
    \ENDFOR
    \STATE ${\bf V} \gets ({\bf v}_{p-d+1},...{\bf v}_p)$;
    \STATE Optimize ${\bf X},\ {\bf C}$ with given ${\bf V},\ max\_iter$ according to Eqn.~\ref{equation:step2} using the alternating minimization scheme;
    \STATE ${\bf V} \gets {\bf C}{\bf X}$;
    \RETURN ${\bf V}$;
\end{algorithmic}
\end{algorithm}

\renewcommand{\algorithmicrequire}{\textbf{Input:}}
\renewcommand{\algorithmicensure}{\textbf{Output:}}
\begin{algorithm}
\begin{algorithmic}[1]
    \caption{Backward Abnormal Subspace Sparse PCA with Global Optimization (ASPCA-BG)}
    \REQUIRE  ${\bf A},\ d,\ \lambda$, and $max\_iter$
    \ENSURE ${\bf V}$
    \FOR{$i=1$ to $d$}
	\STATE ${\bf V}_{i-1} \gets ({\bf v}_1,{\bf v}_2...{\bf v}_{i-1})$;
 	\STATE ${\bf R}_{i-1} \gets {\bf V}_{i-1}{\bf V}_{i-1}^T$;
	\STATE Optimize ${\bf v}_i$ with given ${\bf A},\ {\bf R}_{i-1}$ according to Eqn.~\ref{equation:aspca_b_sdp_final} using SDP;
    \ENDFOR
    \STATE ${\bf V} \gets ({\bf v}_1,{\bf v}_2...{\bf v}_d)$;
    \STATE Optimize ${\bf X},\ {\bf C}$ with given ${\bf V},\ max\_iter$ according to Eqn.~\ref{equation:step2} using the alternating minimization scheme;
    \STATE ${\bf V} \gets {\bf C}{\bf X}$;
    \RETURN ${\bf V}$;
\end{algorithmic}
\end{algorithm}


\newpage
\section{Experiment}

\subsection{Datasets}
Our proposed ASPCA models were evaluated on the synthetic data introduced in Section~2, a medical dataset {\bf Breast-Cancer}, and a network intrusion detection dataset {\bf KDD99}.

{\bf Breast Cancer Wisconsin (Diagnostic) Data Set} \cite{breast-cancer-data} provides features to distinguish malignant and benign tumors. The features describe characteristics of the cell nuclei present in a digitized image of a \emph{fine needle aspirate} (FNA) of a breast mass. As there are plenty cells for a breast mass, the features are three important statistics (mean, standard error, and worst value) on 10 features for each cell: \emph{radius}, \emph{texture}, \emph{perimeter}, \emph{area}, \emph{smoothness}, \emph{compactness}, \emph{concavity}, \emph{concave points}, \emph{symmetry} and \emph{fractal dimension}. There are 357 benign records, and we kept the first 10 malignant records to transform the classification task to an anomaly detection task, same as the other works \cite{Kriegel:2009:LLO, Amer:2013:EOS} did.  All 30 real-valued features were deducted by the mean values and linearly scaled to $[-1, 1]$.

{\bf KDD 99 Intrusion Dataset} \cite{cup1999data} is a widely used data for anomaly and intrusion detection. Each instance is a connection record classified as normal or one of 22 classes of attacks. Attacks fall into four main groups: DoS, Remote-to-local, User-to-root, and Probe. We chose all normal and part of the abnormal records in 10\% KDD99 datasets as shown in Table \ref{Table:KDD 99} as picking the first 500 records on \emph{smurf} ,\emph{neptune}, \emph{back}, \emph{teardrop}, \emph{satan}, \emph{ipsweep}, and \emph{portsweep} and all records on other attacking types. The number of records for each type are shown in brackets in Table \ref{Table:KDD 99} too.We followed a similar preprocessing procedure as in \cite{jiang2013family}. There are 41 features including seven categorical features which were mapped into distinct positive integers from 0 to $m-1$ ($m$ is the number of states for the categorical feature). For example, 0 to 2 in \emph{protocol\_type} stands for TCP, UDP, and ICMP. Logarithmic scaling was applied on \emph{duration},  \emph{src\_bytes}, and \emph{dst\_bytes}, and all features were deducted by the mean values and linearly scaled to $[-1, 1]$. 

\begin{table}
\small
\centering
\caption{Statistics of KDD99 and the relevant features identified by JSPCA}
\label{Table:KDD 99}
\begin{tabular}{|c|c|c|c|}
\hline
Categories & \begin{tabular}[c]{@{}c@{}}\# \\Records \end{tabular}  & Types                                                                                                                                                             & \begin{tabular}[c]{@{}c@{}}Relevant Features \\ (JSPCA)\end{tabular}                                                                           \\ \hline
Normal     & 97,277 &                                                                                                                                                                   &                                                                                                                                                \\ \hline
DoS        & 2,264  & \begin{tabular}[c]{@{}c@{}}smurf(500),\\neptune(500),\\ back(500),\\ teardrop(500),\\ pod(264)\end{tabular}                                                      & \begin{tabular}[c]{@{}c@{}}\emph{service}, \\ \emph{src\_bytes},\\ \emph{dst\_bytes},\\ \emph{count}, \emph{srv\_count},\\ \emph{dst\_host\_count},\\ \emph{dst\_host\_srv\_count}\end{tabular} \\ \hline
Probe      & 1,731  & \begin{tabular}[c]{@{}c@{}}satan(500),\\ ipsweep(500),\\ portsweep(500),\\ nmap(231)\end{tabular}                                                                 & \emph{source\_bytes}                                                                                                                                  \\ \hline
U2R        & 52     & \begin{tabular}[c]{@{}c@{}}buffer\_overflow(30),\\ loadmodule(9),\\ perl(3), rootkit(10)\end{tabular}                                                             & \begin{tabular}[c]{@{}c@{}}\emph{duration},\\ \emph{src\_bytes},\\ \emph{dst\_bytes},\\ \emph{dst\_host\_count},\\ \emph{dst\_host\_srv\_count}\end{tabular}                      \\ \hline
R2L        & 1,126  & \begin{tabular}[c]{@{}c@{}}ftp\_write(8),\\ guess\_passwd(53),\\ imap(12), \\ multihop(7),\\ phf(4), spy(2),\\ warezclient(1,020),\\ warezserver(20)\end{tabular} & \begin{tabular}[c]{@{}c@{}}\emph{duration}, \emph{service},\\ \emph{src\_bytes},\\ \emph{dst\_bytes},\\ \emph{dst\_host\_count},\\ \emph{dst\_host\_srv\_count}\end{tabular}             \\ \hline
\end{tabular}
\end{table}

\subsection{Methodology}

We compared our proposed ASPCA models with the standard PCA model for detection and sparsity performance,  and with two state-of-the-art analytical models on PCA results: the JSPCA model \cite{jiang2013family} and a decision tree model used in \cite{XuWei-SOSP} for interpretation performance. For the decision tree model, we formed the training set with all predicted normal records and anomalies returned by our ASPCA model as negative and positive samples, respectively. Then the decision trees were trained using the CART model from MATLAB and trimmed manually for the best interpretation.

The parameters used in our model were listed in Table~\ref{table:parameters} and discussed in Section~\ref{sec:parameter}. The number of PCs used in PCA equals the total number of features minus the number of abnormal PCs for all datasets. The results of JSPCA on the synthetic data were obtained by choosing the best performed parameters, and we directly reported their results on KDD99 in \cite{jiang2013family}.   Note that, our ASPCA models make no assumptions on the anomalies in the dataset, and we built one model on the entire KDD99 dataset with anomalies from all different categories, whereas JSPCA built four models on KDD99, each including anomalies for a single major attacking category \cite{jiang2013family}. 

\begin{table}
\small
\centering
\caption{Parameters}
\begin{tabular}{|c|c|c|}
\hline
              & \# abnormal PCs & $\lambda$ \\ \hline
Synthetic     & 4               & 5         \\ \hline
Breast-Cancer & 10              & 5         \\ \hline
KDD99        & 35              & 100       \\ \hline
\end{tabular}
    \label{table:parameters}
\end{table}  

Finally, we implemented all methods with MATLAB and CVX, and performed all experiments on a laptop computer with 16 GB memory and a Intel(R) Core(TM) i7-4870HQ 2.50GHz CPU.

\subsection{Experimental Results}

\subsubsection{Detection Evaluation}  
We first compared the various ASPCA models with the standard PCA model on the anomaly detection performance. Because all models can obtain a perfect ROC
curve for the Synthetic data, we only show the ROC curves on Breast-Cancer and KDD99 in Figure~\ref{fig:cancer:roc} and Figure~\ref{fig:kdd99:roc}, respectively. Note that, 
since ASPCA-F and ASPCA-FG use the same abnormal subspace to detect anomalies, their ROC curves are identical which are labeled as ASPCA-F(G). Similarly, the ROC curves of ASPCA-B and ASPCA-BG are labeled as ASPCA-B(G). From Figure~\ref{fig:cancer:roc}, we can see that our proposed ASPCA-F(G) and ASPCA-B(G) models performed similarly or even better than PCA on anomaly detection for both datasets.

\begin{figure}
\centering
\subfloat[Breast-Cancer]
{
	\label{fig:cancer:roc}
	\includegraphics[width=40mm]{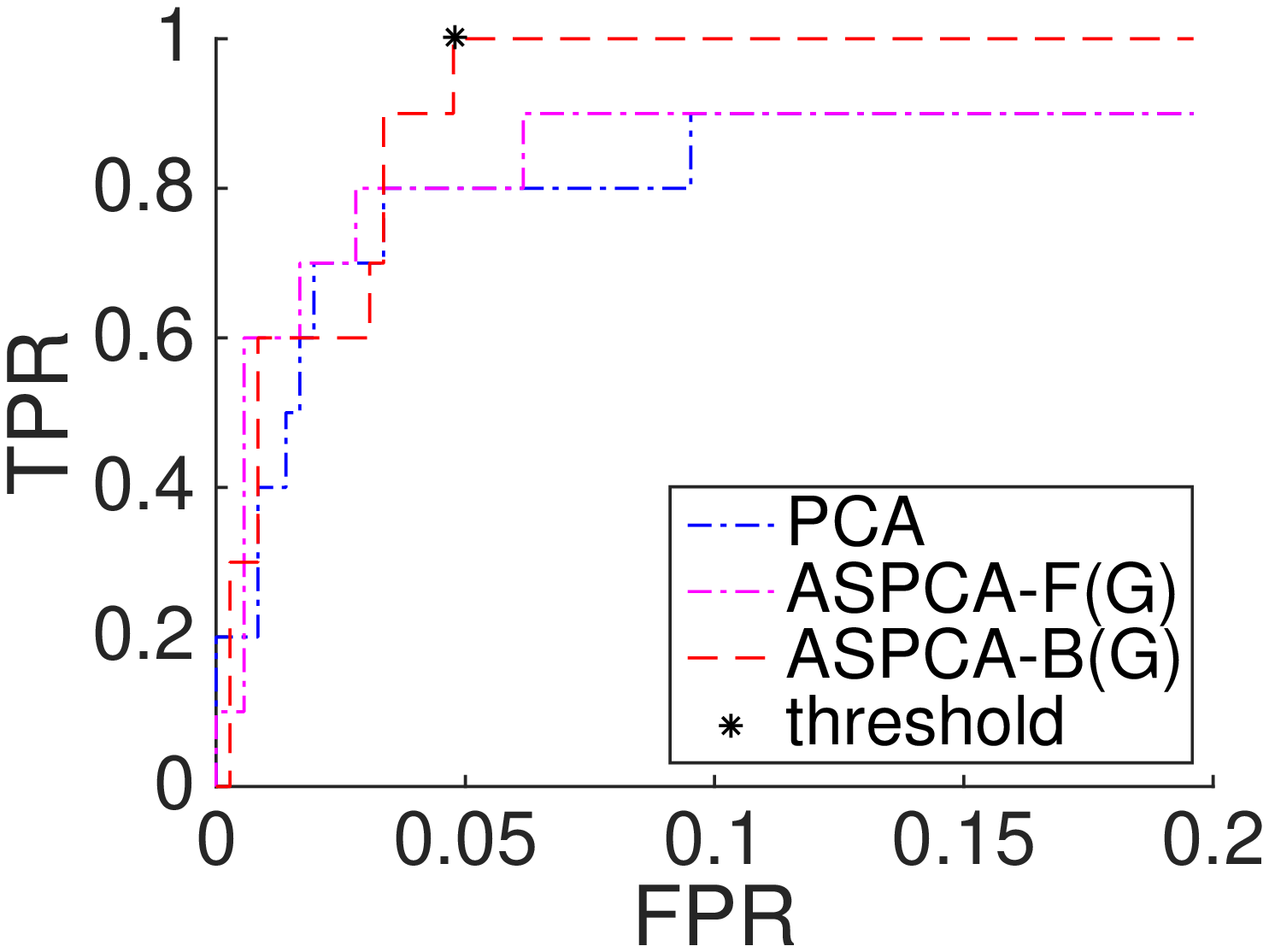}
}
\subfloat[KDD99]
{
	\label{fig:kdd99:roc}
	\includegraphics[width=40mm]{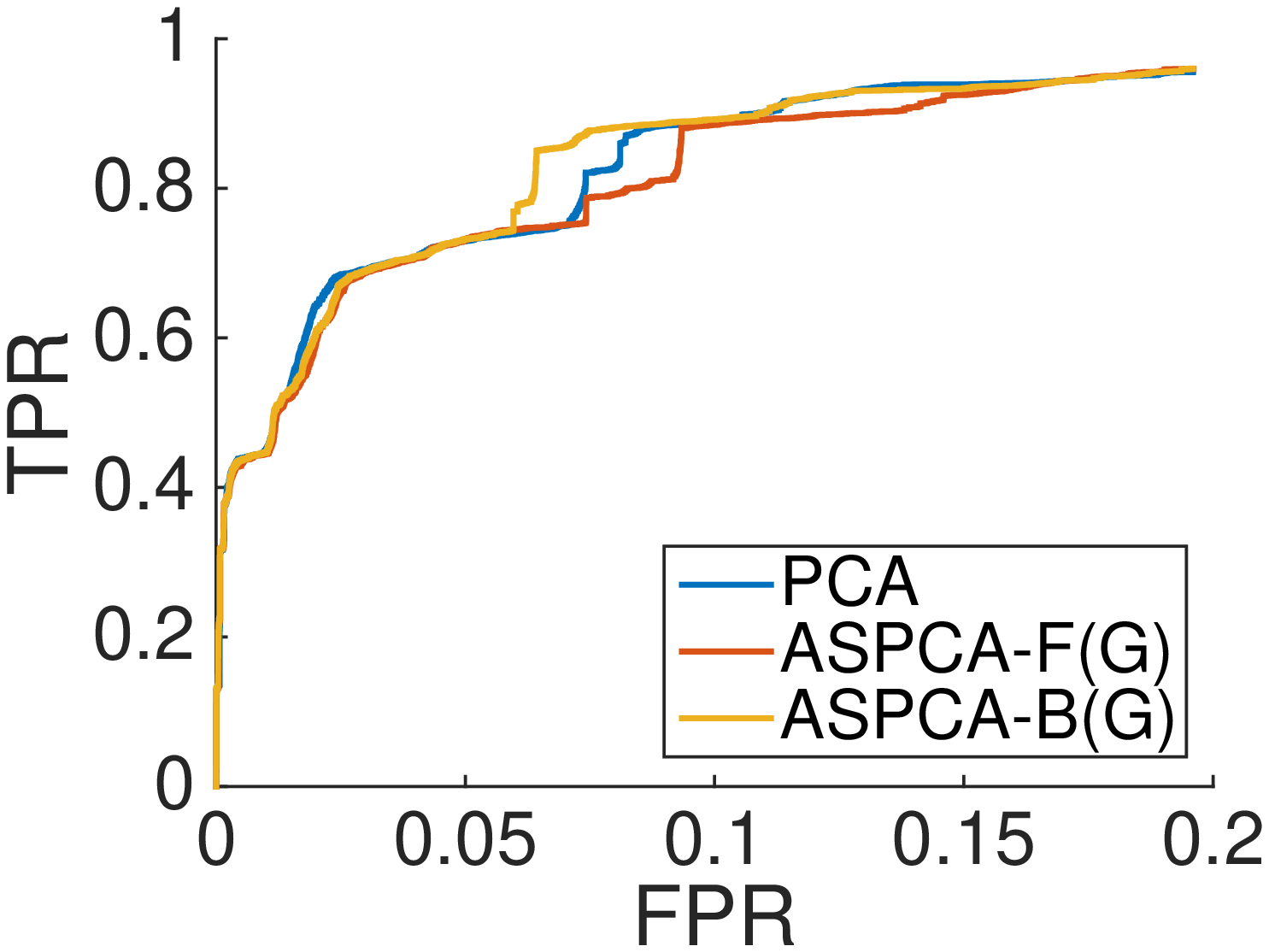}
}
\caption{ROC curves}
\label{fig:detection performane}
\end{figure}

\subsubsection{Sparsity Evaluation}
The next set of experiments were designed to compare the sparsity of the loading matrix generated by various ASPCA models and we used the result of PCA as our baseline.  We used three metrics to evaluate the sparsity of the loading matrix of the abnormal PCs, namely,  $||V||_{1,1}$, $Card_{0.1}$ (number of entries with absolute values bigger than 0.1), and $Card_{0.01}$ (number of entries with absolute values bigger than 0.01), and showed the results in Table~\ref{table:sparsity}. We can see that all ASPCA models improved the sparsity of the loading matrix greatly over the baseline. For all datasets, the ASPCA-B model achieved better sparsity performance than the ASPCA-F model. The global optimization step improved $||V||_{1,1}$ values for both models on Breast-Cancer and KDD99. However, in terms of cardinality, ASPCA-FG performed worse than ASPCA-F on Breast-Cancer. The global optimization step achieved the largest sparsity improvement on KDD99, as it has more abnormal PCs than the other two datasets leaving more room for the global optimization. Overall, the ASPCA-BG model achieved the best sparsity performance.
  
The loading matrices returned by PCA and ASPCA-B (the other three ASPCA models have very similar results) on the Synthetic data are shown in Figure~\ref{fig:mappingmatrix}, and the loading matrices returned by PCA, ASPCA-F, ASPCA-B, ASPCA-FG, ASPCA-BG on Breast-Cancer and KDD99 are shown in Figure~\ref{fig:components}. We can see that ASPCA-F usually leaves some loading vectors with poor sparsity towards the end, which should be avoided as they are part of abnormal subspace. On the contrary, ASPCA-B leaves the loading vectors not so sparse towards the beginning, which need no interpretation as they belong to the normal subspace. 

\begin{table}
\small
\caption{Sparsity on Synthetic data, Breast-Cancer, and KDD99}
\begin{tabular}{|l|l|l|l|l|}
\hline
dataset                        & method   & $||V||_{1,1}$ & $card_{0.1}$ & $card_{0.01}$ \\ \hline
\multirow{5}{*}{Synthetic}     & PCA      & 7.07    & 16        & 24           \\ \cline{2-5} 
		                       & ASPCA-F  & 5.54    & 9           & 9           \\ \cline{2-5} 
		                       & ASPCA-B  & 5.31    & 8          & 8           \\ \cline{2-5} 
		                       & ASPCA-FG & 5.54    & 9           & 9           \\ \cline{2-5} 
		                       & ASPCA-BG & 5.31    & 8           & 8           \\ \hline
\multirow{5}{*}{Breast-Cancer} & PCA      & 34.22    & 111        & 237           \\ \cline{2-5} 
		                       & ASPCA-F  & 17.23    & 33           & 50           \\ \cline{2-5} 
		                       & ASPCA-B  & 12.31    & 18          & 18           \\ \cline{2-5} 
		                       & ASPCA-FG & 16.50    & 36           & 63           \\ \cline{2-5} 
		                       & ASPCA-BG & 12.31    & 18           & 18           \\ \hline
\multirow{5}{*}{KDD99}        & PCA      & 97.33    & 248          & 691           \\ \cline{2-5} 
                               & ASPCA-F  & 55.01    & 96           & 265           \\ \cline{2-5} 
                               & ASPCA-B  & 54.52    & 101          & 215           \\ \cline{2-5} 
                               & ASPCA-FG & 42.77    & 58           & 159           \\ \cline{2-5} 
                               & ASPCA-BG & 43.05    & 57           & 157           \\ \hline
\end{tabular}
\label{table:sparsity}
\end{table}

\begin{figure*}
	\centering
    \subfloat[]{\includegraphics[width=35mm]{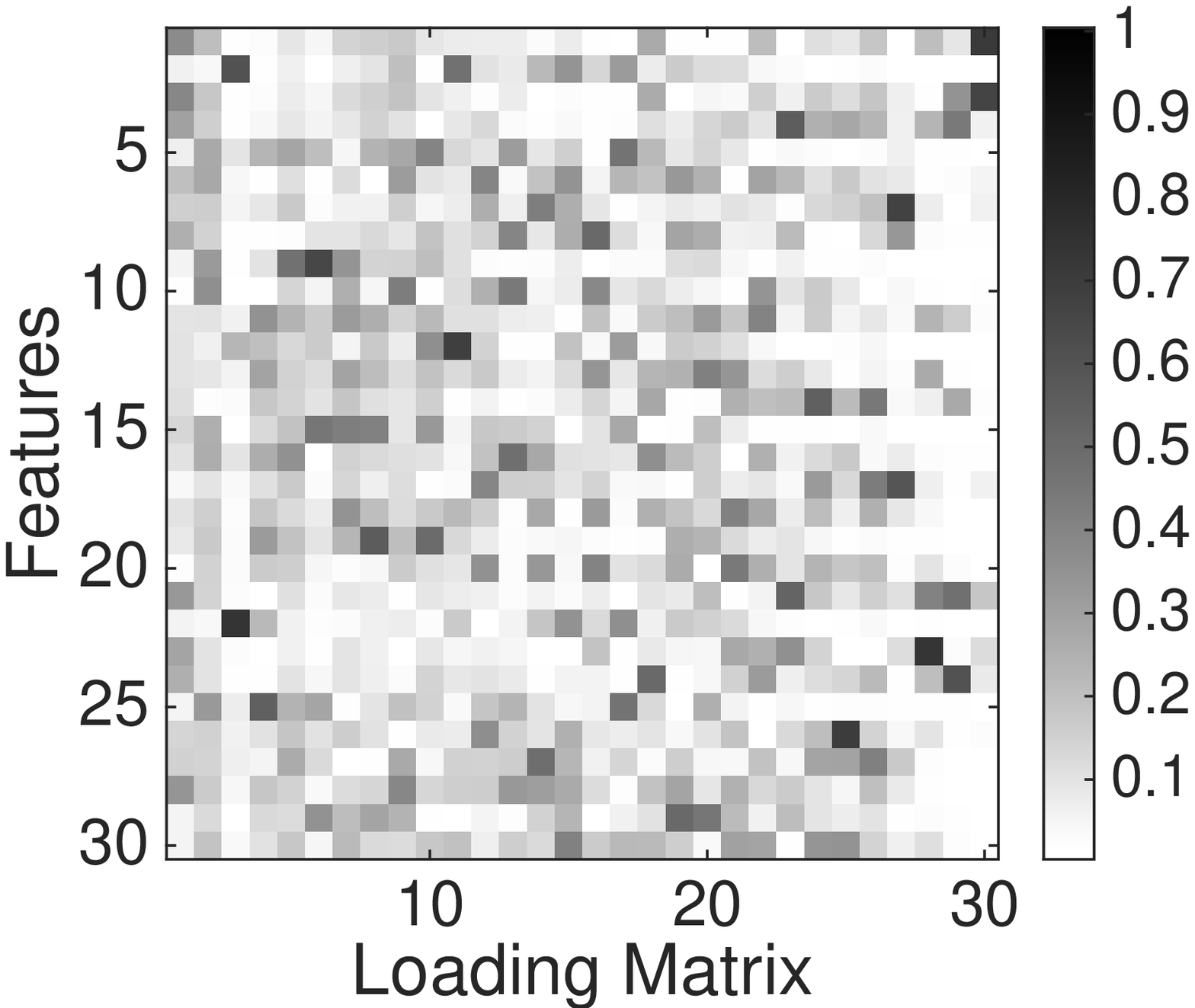}}
    \subfloat[]{\includegraphics[width=35mm]{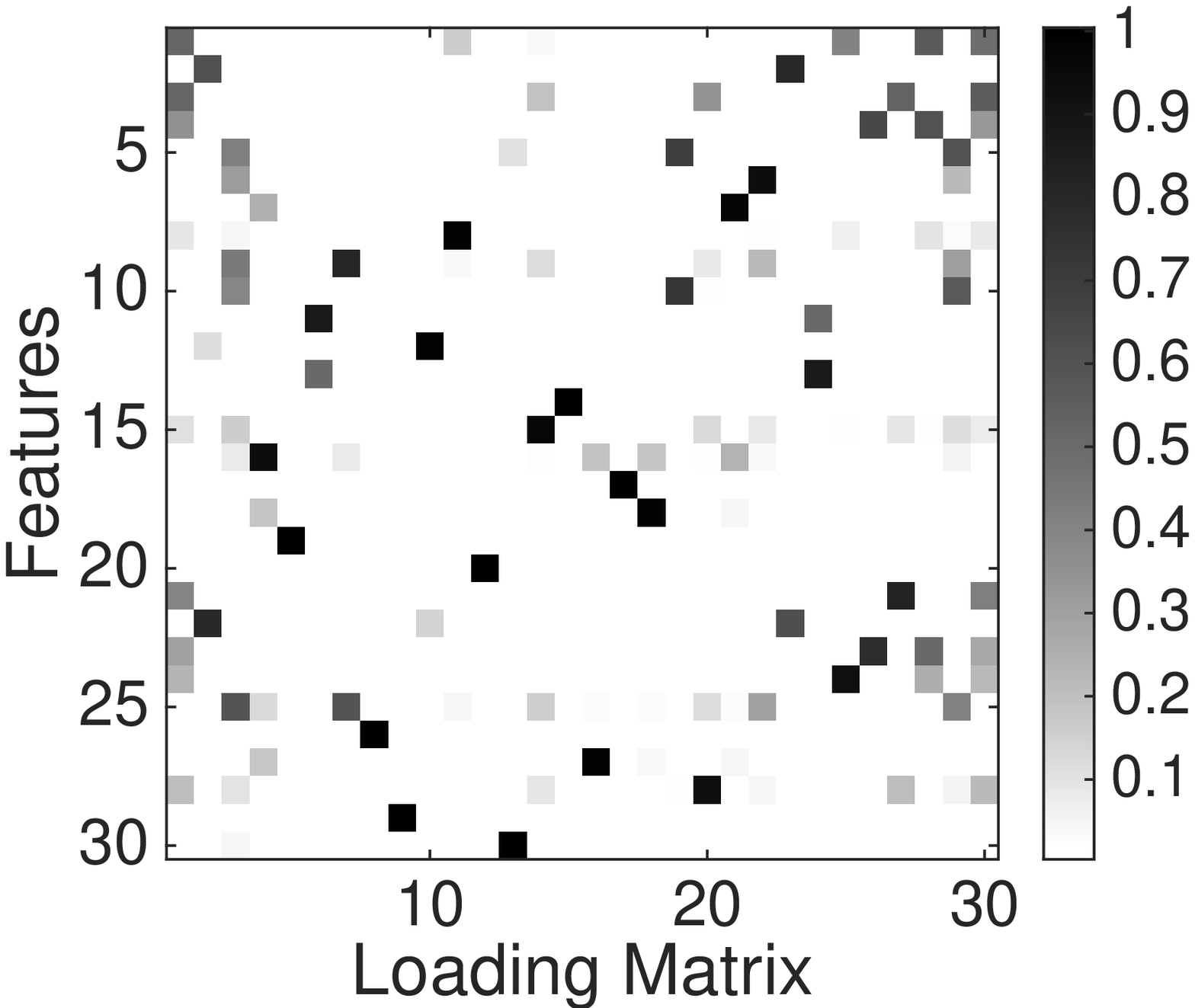}}
    \subfloat[]{\includegraphics[width=35mm]{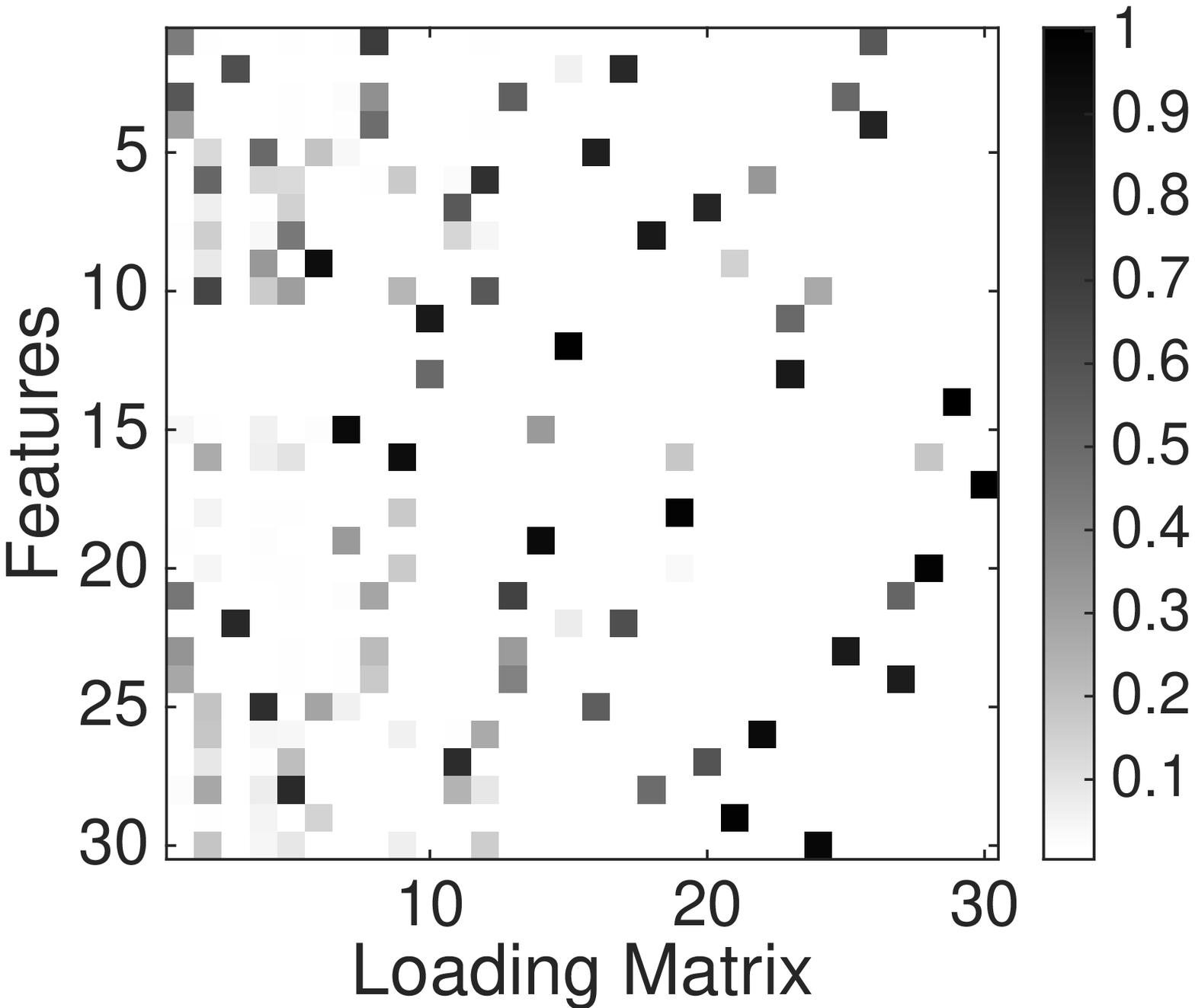}}
    \subfloat[]{\includegraphics[width=35mm]{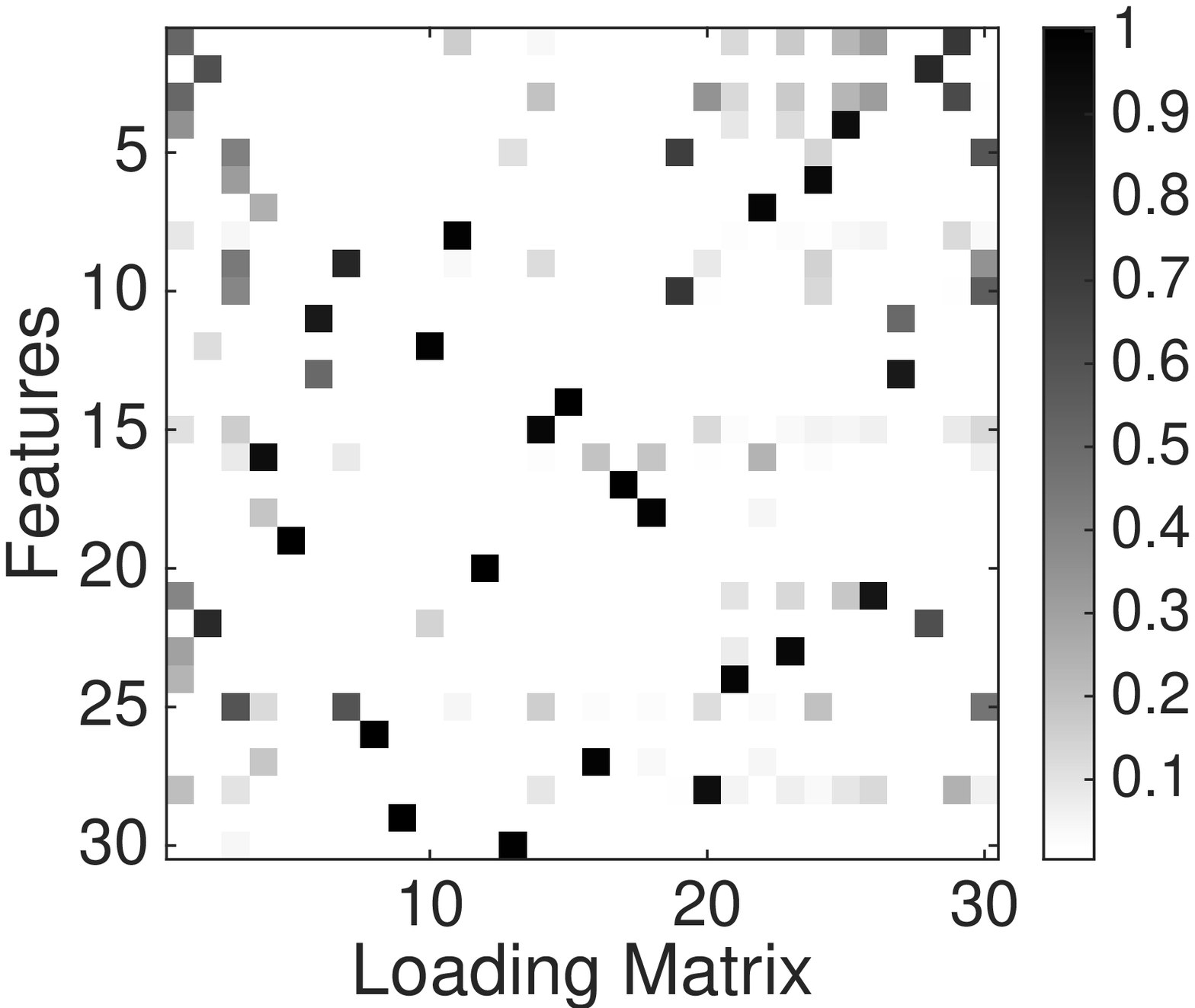}}
    \subfloat[]{\includegraphics[width=35mm]{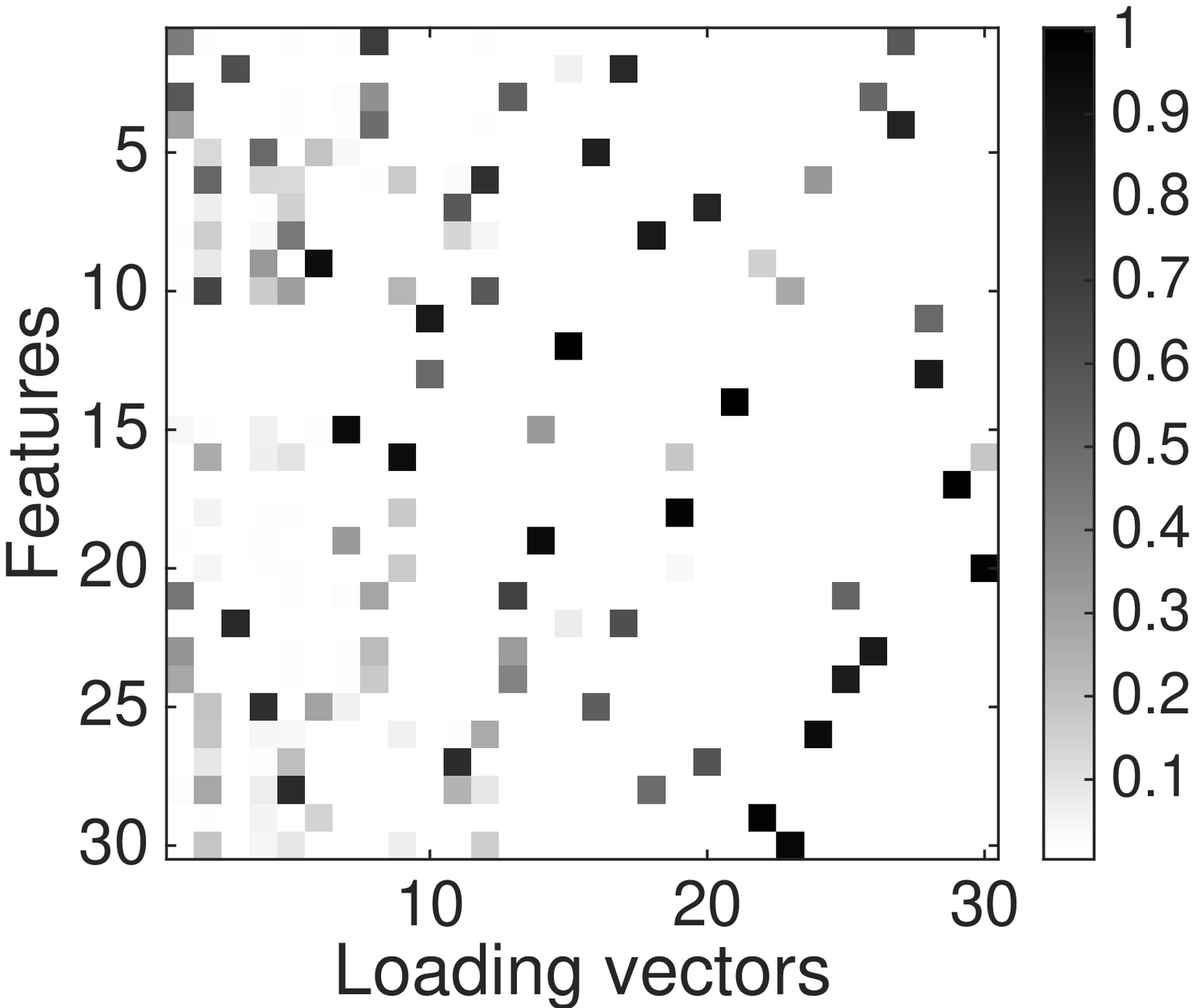}}
	\\
    \subfloat[]{\includegraphics[width=35mm]{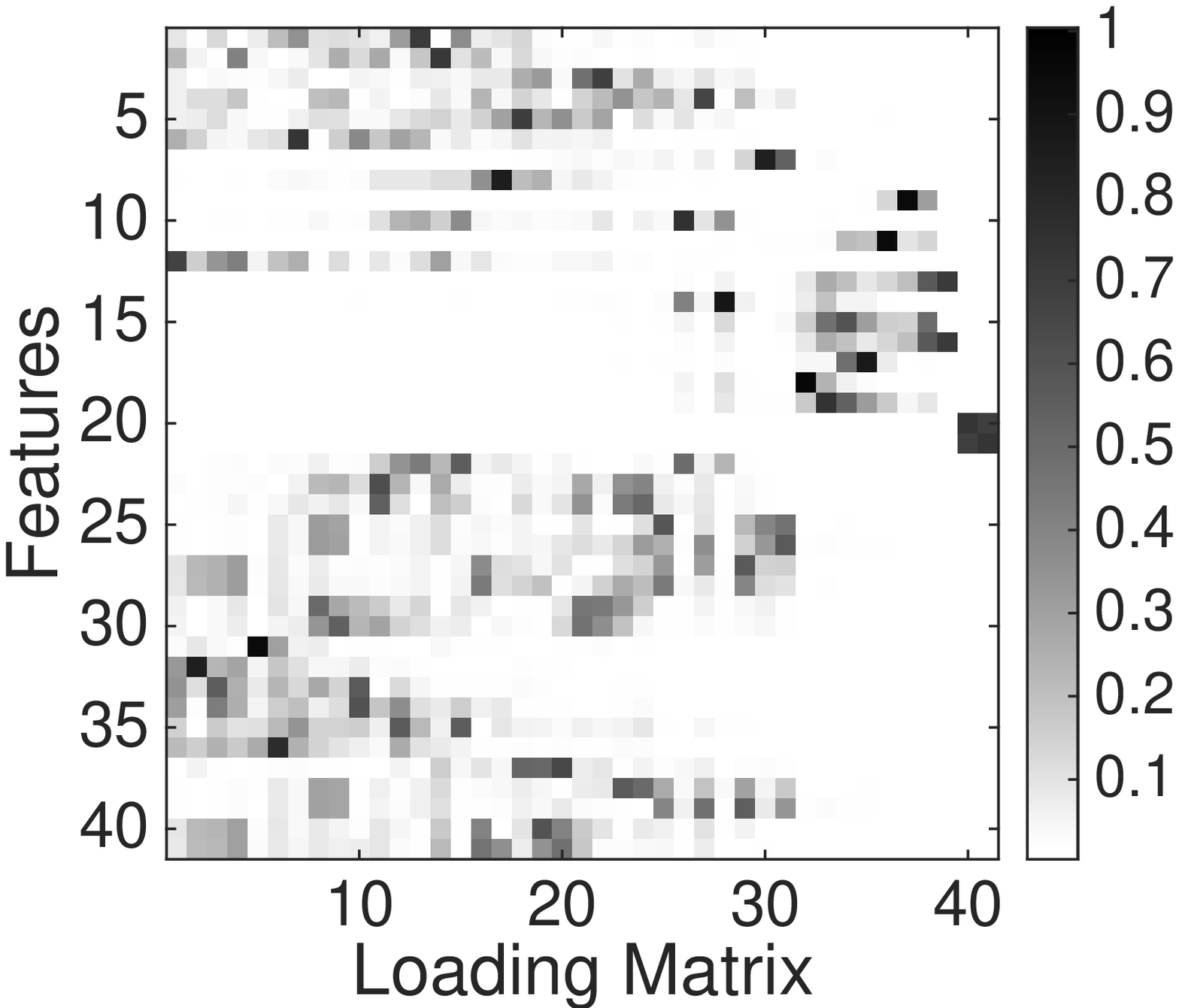}}
    \subfloat[]{\includegraphics[width=35mm]{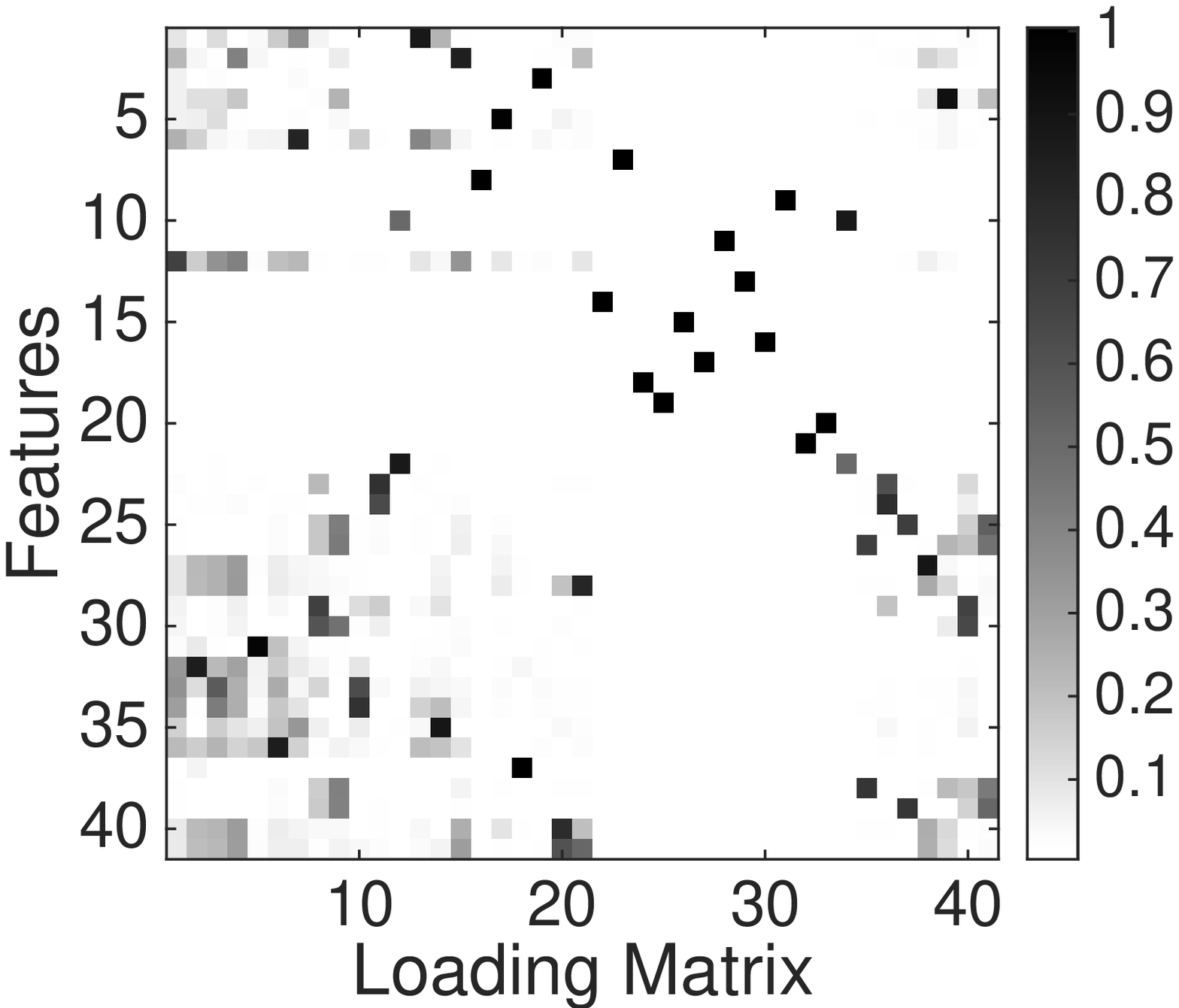}}
    \subfloat[]{\includegraphics[width=35mm]{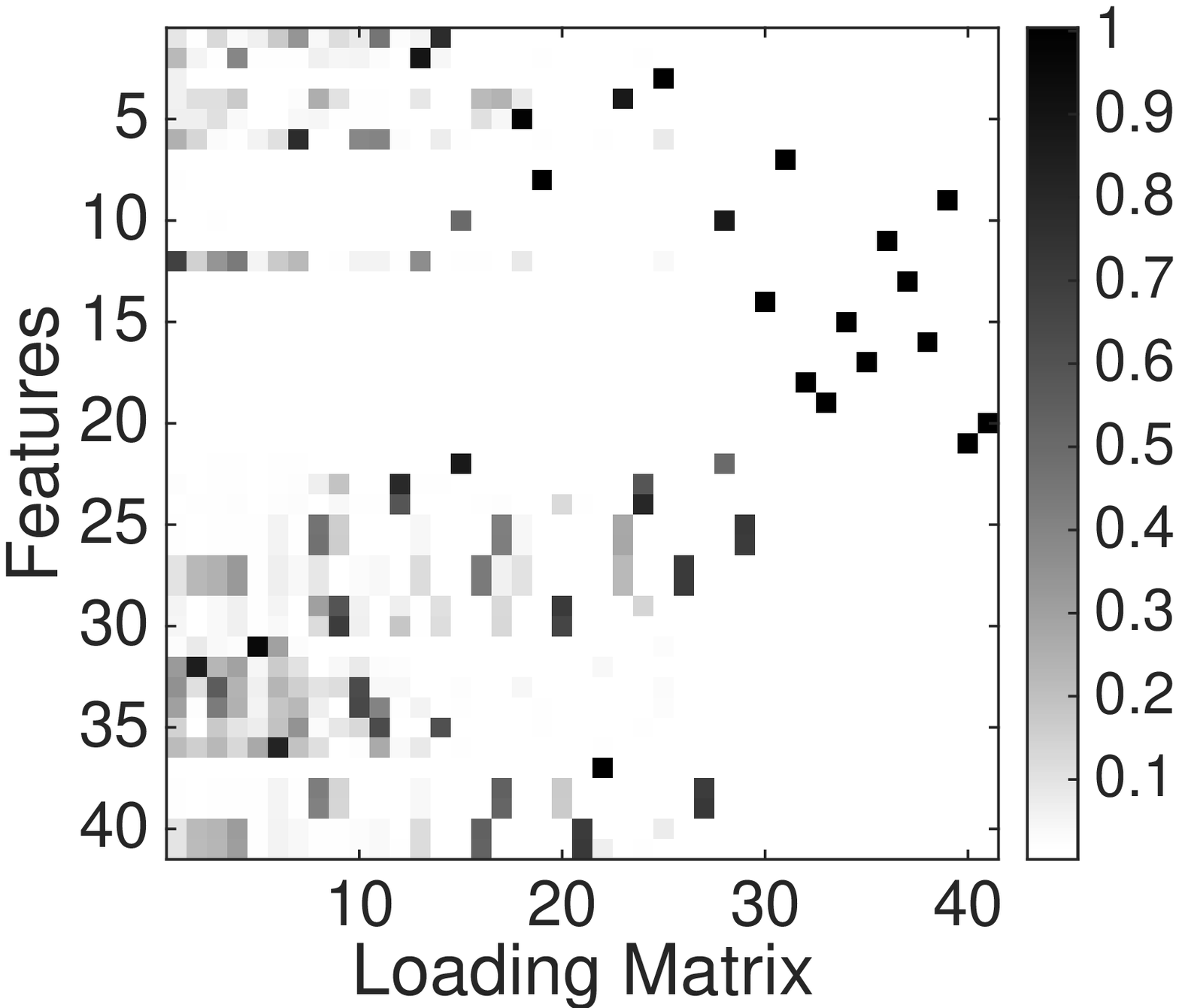}}
    \subfloat[]{\includegraphics[width=35mm]{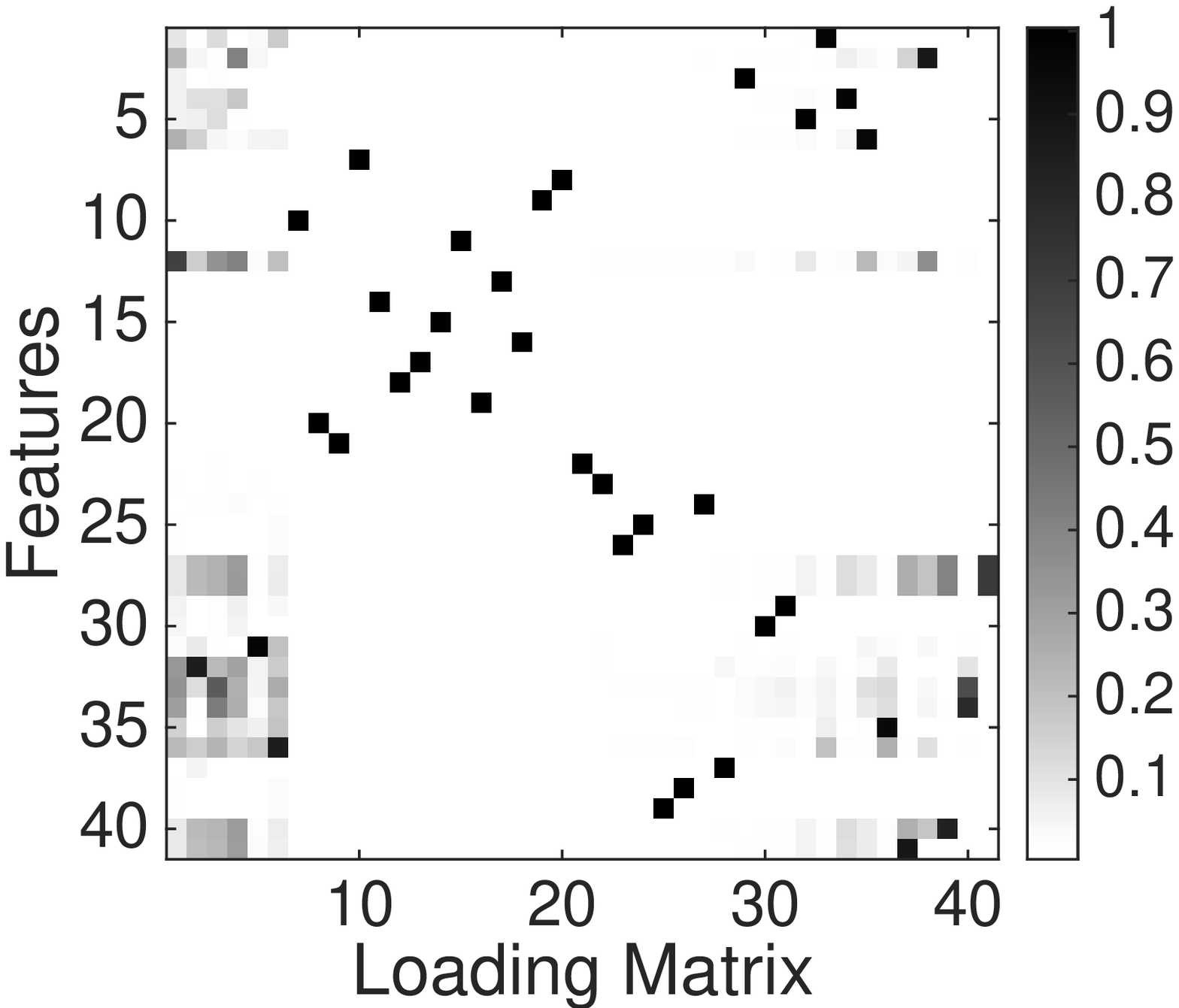}}
    \subfloat[]{\includegraphics[width=35mm]{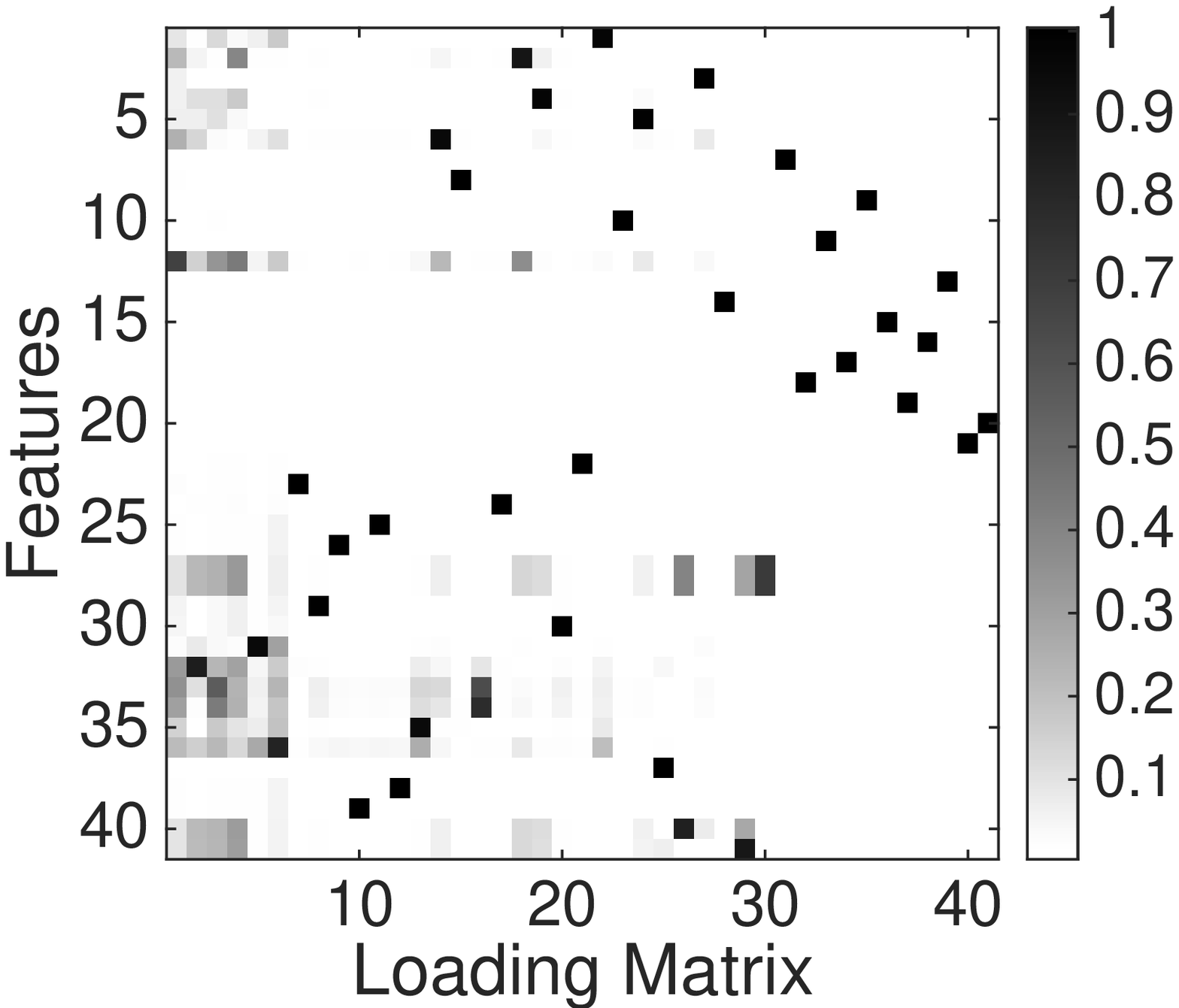}}
\caption{Loading matrix of PCs of Breast-Cancer (Top) and KDD99 (Bottom) obtained by PCA, ASPCA-F, ASPCA-B, ASPCA-FG, and ASPCA-BG from left to right}
\label{fig:components}
\end{figure*}

\subsubsection{Interpretation Evaluation}

Now we evaluate the interpretation performance of the ASPCA-BG model, as it has the best sparsity performance. Since we want to see how true anomalies are interpreted by our model, we selected a threshold value on SPE to ensure most of the true anomalies are detected. We show the threshold values, false positive rates (FPR), and true positive rates (TPR) for all three datasets in Table~\ref{table:threshold}. 

\begin{table}
\small
\centering
\caption{SPE Threshold}
\begin{tabular}{|l|l|l|l|}
\hline
              & SPE threshold & TPR    & FPR    \\ \hline
Synthetic     & 0.25          & 1      & 0      \\ \hline
Breast-Cancer & 0.1003  & 1      & 0.0476 \\ \hline
KDD99        & 0.5075      & 0.8516 & 0.0657 \\ \hline
\end{tabular}
\label{table:threshold}
\end{table}

{\bf Synthetic Data:}
The four abnormal PCs and the projection values of 15 anomalies on these PCs are shown in Table~\ref{table:components:synthetic} and Figure~\ref{fig:heatmap:synthetic}, respectively. As shown in Table~\ref{table:components:synthetic}, the first three PCs correspond to the rules of $D \approx C + A$, $A \approx B$,  and $F \approx 0$, respectively. The anomalies breaking these rules indeed have large projection values on the corresponding PCs. Thus, our ASPCA-BG model can not only identify the set of
features that are responsible for an anomaly, but also tell the cause of the anomaly, \i.e., breaking the rules indicated by the abnormal PCs.  

JSPCA also successfully identified the relevant features $(A, B, D, F)$ as suggested in Figure~\ref{fig:loading matrix:synthetic}. However, it cannot tell the source of each individual anomaly. Unlike JSPCA, our ASPCA models make no assumptions on whether there are anomalies present in the dataset for model training. Keeping only normal data from the Synthetic dataset,  the ASPCA-BG model found four abnormal PCs with loading vectors $(0.31, 0.31, 0.64, 0.64, 0, 0, 0)^T$, $(-0.71,$  $0.71, 0, 0, 0, 0)^T$, $(0, 0, 0, 0, 0, 1, 0)^T$ , and $(0, 0, 0, 0, 0, 0, 1)^T$, which are very similar to the ones in Table~\ref{table:components:synthetic}. Using these abnormal PCs, we can successfully detect and interpret anomalies as well.

\begin{figure}
\centering
	\includegraphics[width=42mm]{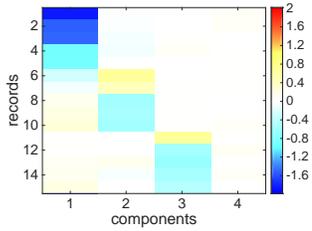}
\caption{Heatmap of projection values of anomalies on abnormal PCs for Synthetic Data}
\label{fig:heatmap:synthetic}
\end{figure}

\begin{table}
\centering
\caption{Components on Synthetic Data}
\label{table:components:synthetic}
\small
\begin{tabular}{|c|c|}
\hline
Index & Components \\ \hline
1 & 0.3099 A + 0.3122 B + 0.6370 C - 0.6330 D \\
2 & 0.7095 A - 0.7047 B\\
3 & 1 F\\
4 & 1 G\\
\hline
\end{tabular}
\end{table}

\begin{figure}
	\centering
	\includegraphics[width=60mm]{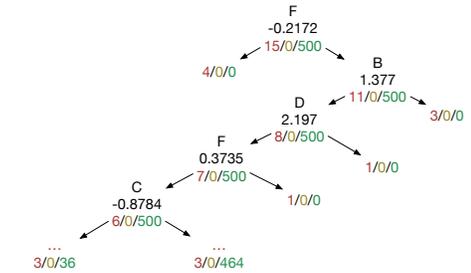}
	\caption{Decision tree on Synthetic Data}
\label{fig:dt:synthetic}
\end{figure}

The decision tree was shown in Figure~\ref{fig:dt:synthetic}, where on each node we showed the feature and its value used to partition the data, the number of true positives detected by ASPCA-BG in red, the number of false positives in yellow, and the number of normal data detected by ASPCA-BG in green. We can see that the decision tree model needs several rules to describe a group of anomalies which could be easily described by a clear linear combination and a threshold. In Figure~\ref{fig:dt:synthetic}, only the third type of anomalies, which has a large absolute value on F, is easy for the decision tree model to interpret.

{\bf Breast-Cancer:}
The projection values of 10 anomalies on the abnormal PCs obtained by ASPCA-BG for Breast-Cancer are shown in Figure~\ref{fig:heatmap:cancer}.  We can see that the malignant records have two patterns: the first four records have large projection values on the 2nd, 3rd, and 4th PCs, the rest records have large projection values on the first PC and moderate projection values on the 6th PC. We show these PCs in Table~\ref{table:breast_cancer}, where coefficients in PCs less than 0.1 were omitted.  The features appearing in the 1st and 6th PCs are \emph{area\_se}, \emph{area\_worst}, and \emph{radius\_worst}, which were reported previously by \cite{Wolberg1995792} as being effective for classifying malignant records.  Note that, \emph{area\_worst} actually has a quadratic relation with \emph{radius\_worst}, our model identified it as a linear relation, which is a good approximation in a small range of radius. The first four records, on the other hand, do not have large projection values on PCs related to \emph{area} features.  They were detected by PCs related to \emph{symmetry}, \emph{fractal dimension}, and \emph{compactness} features, which clearly indicates another type of malignant records.

\begin{figure}
\centering
	\includegraphics[width=50mm]{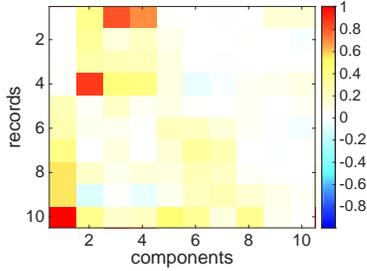}
\caption{Heatmap of projection values of anomalies on abnormal PCs of Breast-Cancer}
\label{fig:heatmap:cancer}
\end{figure}

\begin{table}
\caption{Components on Breast-Cancer.}
\label{table:breast_cancer}
\small
\centering
\begin{tabular}{|c|c|}
\hline
Index & Components                                                                                                  \\ \hline
1     & 1 \emph{area\_se}                                                                                       \\ \hline
2     & 0.9894 \emph{symmetry\_worst}             \\ \hline
3     & \begin{tabular}[c]{@{}c@{}}0.9631 \emph{fractal\_dimension\_worst} \\ - 0.2693 \emph{fractal\_dimension\_mean}\end{tabular}                                                                                                 \\ \hline
4     & \begin{tabular}[c]{@{}c@{}}0.9445 \emph{compactness\_worst}\\  - 0.3286 \emph{compactness\_mean}\end{tabular}                                                                                              \\ \hline
6     & 0.8554 \emph{area\_worst} - 0.5180 \emph{radius\_worst}                                                                                \\ \hline
\end{tabular}
\end{table}

\begin{figure}
\centering
	\includegraphics[width=42mm]{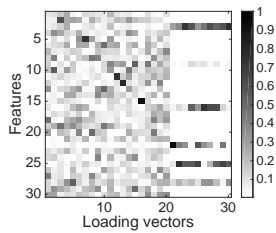}
\caption{Loading matrix of abnormal PCs obtained by JSPCA on Breast-Cancer}
\label{fig:JSPCA:cancer}
\end{figure}

\begin{figure}
\centering
	\includegraphics[width=40mm]{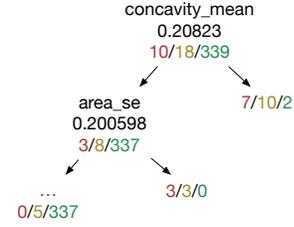}
	\caption{Decision tree on Breast-Cancer}
         \label{fig:tree:cancer}
\end{figure}

The loading matrix of the abnormal PCs obtained by JSPCA on Breast-Cancer is shown in Figure~\ref{fig:JSPCA:cancer}. The relevent features are \emph{radius\_mean}, \emph{concavity\_mean}, \emph{area\_se}, \emph{fractal\_} \emph{dimension\_se}, \emph{perimeter\_worst}, and \emph{compactness\_worst}. As we can see, JSPCA cannot tell the different causes of individual anomalies.
The decision tree obtained on Breast-Cancer is shown in Figure~\ref{fig:tree:cancer}. Our ASPCA-BG model detected 10 true positives, 18 false positives, and 339 true negatives (shown on the root node in red, yellow, and green, respectively) with the chosen SPE threshold. The tree used \emph{concavity\_mean} to separate positive and negative samples. However, the attribute \emph{concavity\_mean} is orthogonal to the abnormal subspace obtained by our ASPCA-BG model, which means it is not the feature based on which our ASPCA-BG model detects anomalies. Hence, using decision trees to interpret results of a subspace-based model, such as ours, may lead to misleading interpretations.

{\bf KDD99:} 
With the given SPE threshold, our ASPCA-BG model detected 4397 true positives on KDD99. Although our model is intended to analyze individual anomalies, we can also summarize interpretations of similar anomalies to make our discussion easier. We used a simple way to generate signatures on whether an anomaly has \emph{low} ($\leq -\sqrt{SPE threshold/2}$), or \emph{high} ($\geq \sqrt{SPE threshold/2}$) projection values on the set of abnormal PCs. Then anomalies were grouped according to their signatures, so that the components in the signature of each group is common for most of the anomalies in the group. 

In Table~\ref{table:signatures on KDD}, we listed some major signatures found by the above method. Actually, for most of the cases, we can associate each signature group with an anomaly type quite well.  The two numbers listed for each anomaly type are the number of detected anomalies by our model and the number of total anomalies of this type, respectively. The two numbers listed for each signature group are the number of the anomalies of this type and the total number of anomalies in the group, respectively. Some anomaly types only have one main corresponding signature groups, whereas we identified three main signature groups for warezclient{[}R2L{]}. For the $i$th PC, $i$L and $i$H represent low and high projection values on it, respectively.  Finally, the components appeared in these signatures are shown in Table~\ref{table:components on KDD}.

\begin{table}
\small
\caption{Signatures on KDD99}
\label{table:signatures on KDD}
\centering
\begin{tabular}{|c|c|l|}
\hline
                                                                                         & \# in Type/              & Important       \\
Type                                                                                     & \# in Group              & Components      \\ \hline
\multirow{2}{*}{\begin{tabular}[c]{@{}c@{}}neptune{[}DoS{]}\\ 500/500\end{tabular}}      & \multirow{2}{*}{481/484} & 2L, 9H, 10H     \\
                                                                                         &                          & 11H, 12H, 14H   \\ \hline
\begin{tabular}[c]{@{}c@{}}smurf{[}DoS{]}\\ 493/500\end{tabular}                         & 472/472                  & 1H, 3L, 8H, 16H \\ \hline
\begin{tabular}[c]{@{}c@{}}teardrop{[}DoS{]}\\ 496/500\end{tabular}                      & 393/393                  & 18H             \\ \hline
\begin{tabular}[c]{@{}c@{}}satan{[}Probe{]}\\ 500/500\end{tabular}                       & 444/444                  & 2L, 3H, 6H, 8H  \\ \hline
\begin{tabular}[c]{@{}c@{}}portsweep{[}Probe{]}\\ 354/500\end{tabular}                  & 175/175                  & 6H, 7L          \\ \hline
\begin{tabular}[c]{@{}c@{}}ipsweep{[}Probe{]}\\ 217/500\end{tabular}                     & 104/109                  & 1H, 19H         \\ \hline
\multirow{3}{*}{\begin{tabular}[c]{@{}c@{}}warezclient{[}R2L{]}\\ 974/1020\end{tabular}} & 272/395                  & 15H, 20H        \\ \cline{2-3} 
                                                                                         & 332/339                  & 4L, 5H, 6L      \\ \cline{2-3} 
                                                                                         & 237/292                  & 4L, 5H          \\ \hline
\begin{tabular}[c]{@{}c@{}}guess-passwd{[}R2L{]}\\ 53/53\end{tabular}                    & 48/121                   & 4H, 5H          \\ \hline
\begin{tabular}[c]{@{}c@{}}buffer-overflow{[}U2R{]}\\ 30/30\end{tabular}                 & 7/7                      & 5H, 24H         \\ \hline
\end{tabular}
\end{table}

From Table~\ref{table:signatures on KDD}, we can see that the signatures for different anomaly types varied a lot, from which we often can find the components that are consistent with the nature of each anomaly type. For example, smurf[DoS] attacks are also known as popular form of DoS packet floods, which turn out to have high \emph{srv\_count} and \emph{count} values ({\it i.e.}, 16H and 8H). Teardrop[DoS] attacks try to break the host by sending mangled IP fragments, which led to high \emph{wrong\_fragment} values ({\it i.e.}, 18H).  We can see similar trends for Probe attacks too. For example, ipsweep[Probe] attacks sweep different hosts (IPs) to find cracks for hacking ({\it i.e.}, 19H), whereas portsweep[Probe] attacks try to visit different service ({\it i.e.}, 6H) and short connection duration ({\it i.e.}, 7L). Buffer overflow[U2R] attackers try to gain the root authority on the host, and 24H indicates that the user has logged into the server with root shell. Warezclient[R2L] attackers try to download files in forbidden directories from the FTP servers. Our interpretation is consistent with \cite{sabhnani2003kdd} as \emph{logged\_in} with low \emph{dst\_bytes} ({\it i.e.}, 4L), \emph{is\_guest\_login} ({\it i.e.}, 15H) and high \emph{hot} values ({\it i.e.}, 20H). 

\begin{table}
\small
\caption{Components on KDD99}
\label{table:components on KDD}
\centering
	\begin{tabular}{|c|c|}
\hline
Index & Components                                                                                                                   \\ \hline
1     & 0.8906 \emph{protocol\_type} + 0.3658 \emph{logged\_in}                                                                                    \\ \hline
2     & 0.9949 \emph{same\_srv\_rate}                                                                                                       \\ \hline
3     & 0.9958 \emph{diff\_srv\_rate}                                                                                                       \\ \hline
4     & \begin{tabular}[c]{@{}c@{}}0.9520 \emph{dst\_bytes} \\ - 0.2222 \emph{logged\_in}\end{tabular}                                             \\ \hline
5     & \begin{tabular}[c]{@{}c@{}}0.7722 \emph{dst\_host\_same\_srv\_rate} \\ - 0.6286 \emph{dst\_host\_srv\_count}\end{tabular}                  \\ \hline
6     & \begin{tabular}[c]{@{}c@{}}0.9466 \emph{dst\_host\_diff\_srv\_rate}\end{tabular}       \\ \hline
7     & 0.9714 \emph{duration}                                                                                                              \\ \hline
8     & 0.9995 \emph{count}                                                                                                                 \\ \hline
9     & 0.9716 \emph{flag}                                                                                                                  \\ \hline
10    & 0.9984 \emph{dst\_host\_serror\_rate}                                                                                               \\ \hline
11    & 0.9981 \emph{srv\_serror\_rate}                                                                                                     \\ \hline
12    & 0.9981 \emph{serror\_rate}                                                                                                          \\ \hline
14    & 0.9985 \emph{dst\_host\_srv\_serror\_rate}                                                                                          \\ \hline
15    & 0.9997 \emph{is\_guest\_login}                                                                                                      \\ \hline
16    & 0.9994 \emph{srv\_count}                                                                                                            \\ \hline
18    & 0.9996 \emph{wrong\_fragment}                                                                                                       \\ \hline
19    & 0.9970 \emph{dst\_host\_srv\_diff\_host\_rate}                                                                                      \\ \hline
20    & 0.9999 \emph{hot}                                                                                                                   \\ \hline
24    & 1 \emph{root\_shell}                                                                                                                \\ \hline
	\end{tabular}
\end{table}

The results obtained by JSPCA are shown in Table~\ref{Table:KDD 99}. The selected features by JSPCA are more general and similar for all categories. Some of the important features for specific attacks are missing too, for instance, \emph{root\_shell} for User-2-Root[U2R] attacks and \emph{hot} and \emph{is\_guest\_login} for R2L attacks as mentioned in \cite{sabhnani2003kdd}.

We show the decision tree obtained on KDD99 in Figure~\ref{fig:tree:KDD}. The features captured by the decision tree are consistent with the components discovered by our ASPCA-BG model to a large extent. For example, low \emph{same\_srv\_rate} was chosen to detect neptune and satan DoS attackers, which is the same as using 2L in our model to detect the same attacks. Similarly, high \emph{protocol\_type} values ({\it i.e.}, using ICMP protocol) was chosen to detect ipsweep and smurf attackers (detected by 1H in our model). Low \emph{dst\_host\_srv\_count} with high \emph{hot} values is an important character of warezclient and guess-passed attacks  (identical to 5H and 20H in our model). An interesting observation on the decision tree in Figure~\ref{fig:tree:KDD} is that a node on the tree will stop splitting as soon as the samples in the node are mostly positive or negative ones. For example, the left child node of the root in Figure~\ref{fig:tree:KDD} stopped splitting with anomalies from different attack types, in which case, our model can provide more information on the differences of the these attack types.
\begin{figure}
\centering
	\includegraphics[width=70mm]{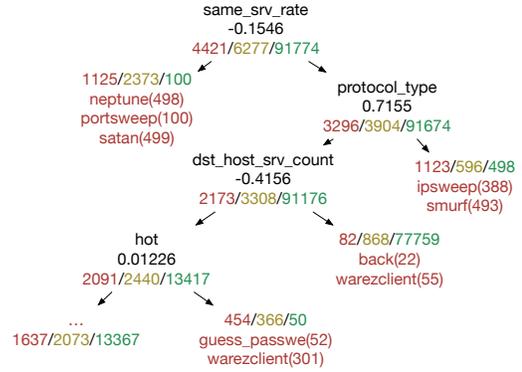}
	\caption{Decision tree on KDD99}
         \label{fig:tree:KDD}
\end{figure}

\subsection{Parameter Selection}
\label{sec:parameter}
Our ASPCA models has two parameters to select: the number of abnormal PCs and the coefficient $\lambda$ on sparsity. We know that PCA-based anomaly detection methods are sensitive to the number of PCs \cite{PCA-Sensitivity}. We plotted the detection accuracy (in terms of Area Under ROC Curve (AUC)) with different number of abnormal PCs on Breast-Cancer (with $\lambda=5$) and KDD99 ($\lambda=100$) in Figure~\ref{Figure:parameter:components}. We selected 20 normal PCs ({\it i.e.}, 10 abnormal PCs) for Breast-Cancer data and 6 normal PCs ({\it i.e.}, 35 abnormal PCs), for KDD99 data, to achieve the highest AUC values for our baseline method PCA, to make the comparisons on detection accuracy fair.

The coefficient $\lambda$ is a trade-off between the sparsity and the additional variance on components. With less additional variance, the variances on the whole detection space for various ASPCA models are closer to the one of PCA. We showed the variance, sparsity (valued by $||V||_{1,1}$) and AUC values obtained by varying the value of $\lambda$ on KDD99 and Breast-Cancer in Table~\ref{Table:parameter:lambada} for ASPCA-FG and ASPCA-BG. We can see that our models are not very sensitive to $\lambda$  in terms of AUC, and we selected $\lambda=5$ for Breast-Cancer, $\lambda=100$ for KDD99 for moderate sparsity and variance. The trends are similar for ASPCA-F and ASPCA-B, which were omitted due to space constraints. We selected the same $\lambda$ values for ASPCA-F and ASPCA-B, as in ASPCA-FG and ASPCA-BG, respectively.

\begin{figure}
\centering
\subfloat[Breast-Cancer]
{
	\includegraphics[width=40mm]{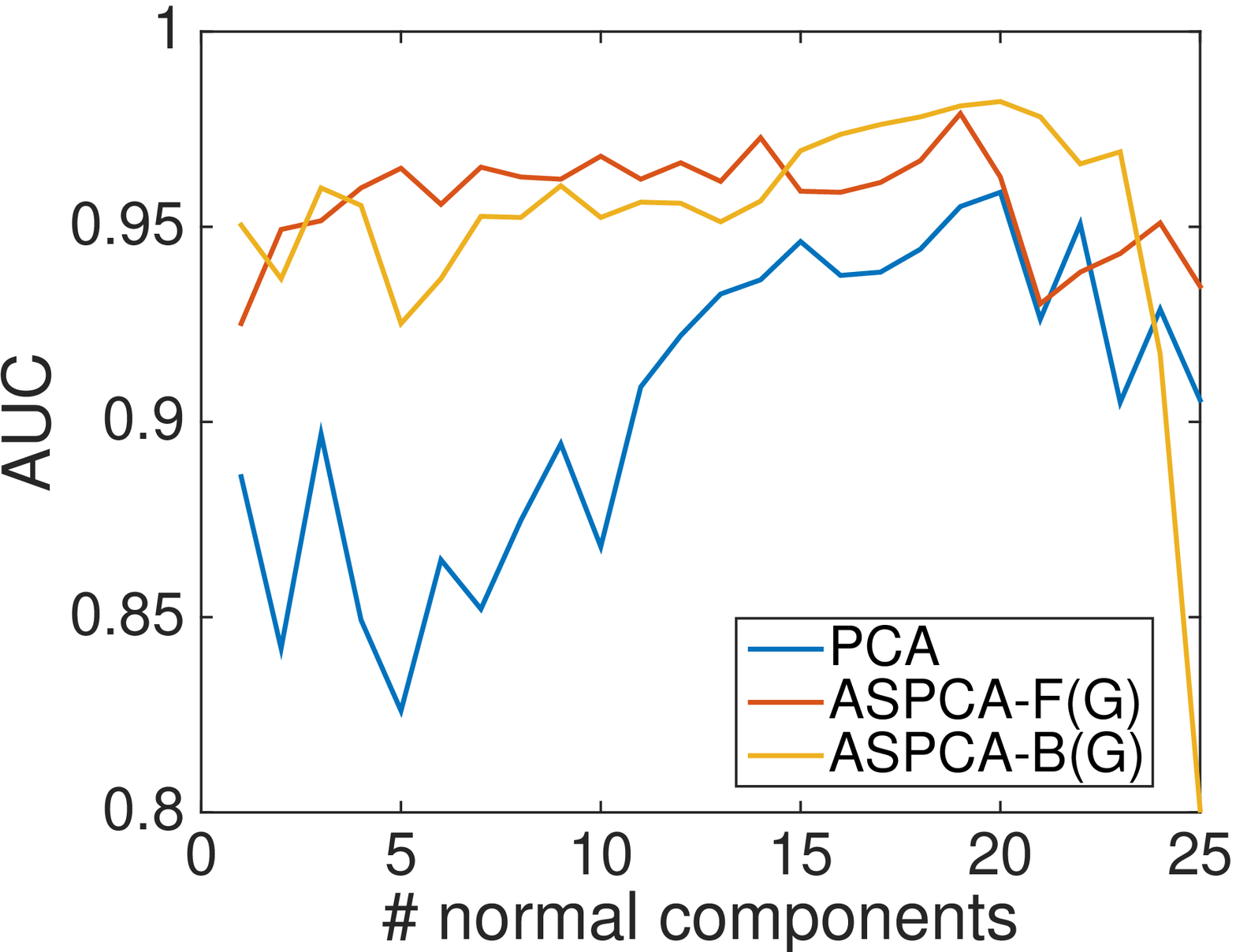}
}
\subfloat[KDD99]
{
	\includegraphics[width=40mm]{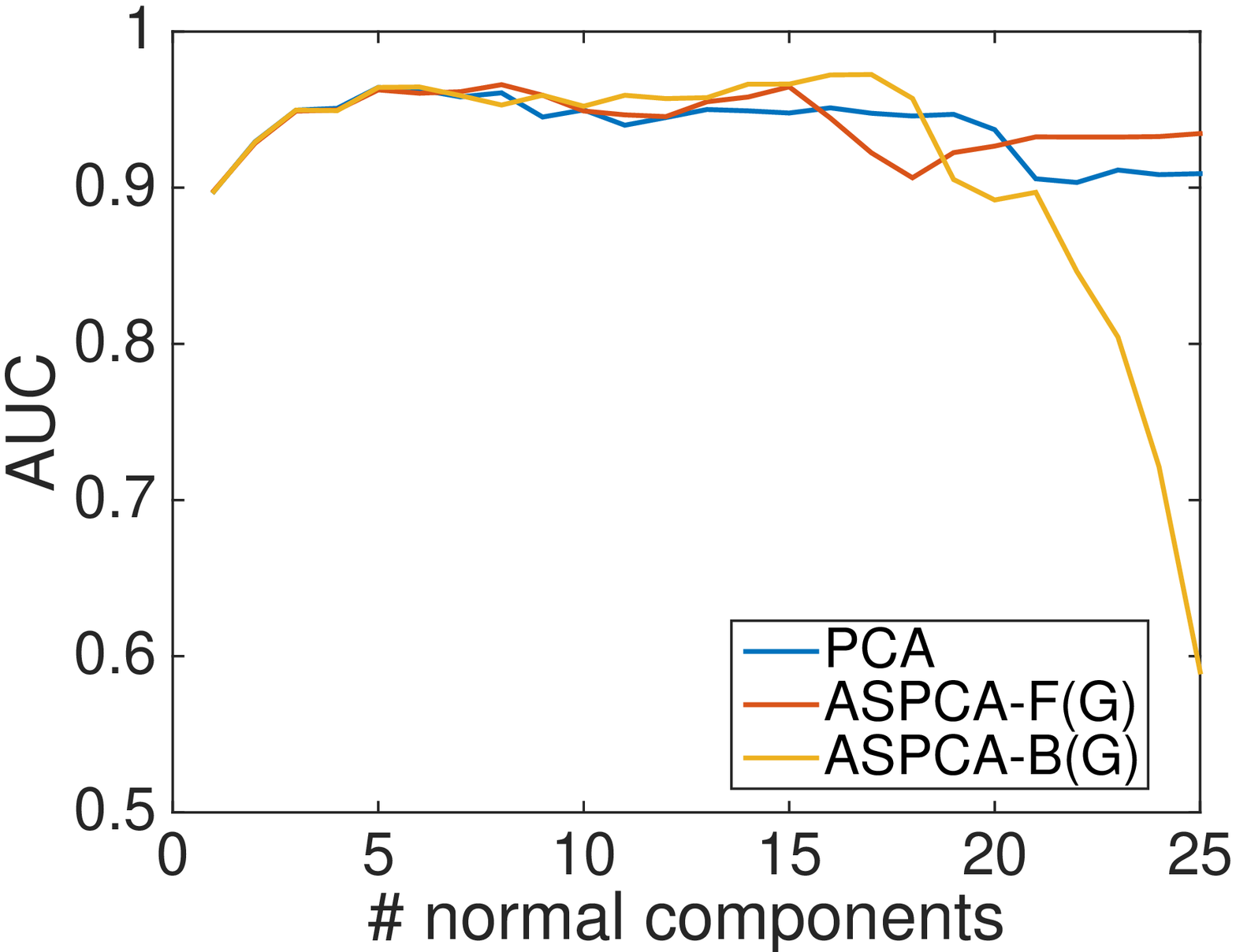}
}
\caption{Selection on the number of normal PCs}
\label{Figure:parameter:components}
\end{figure}

\begin{table*}
\small
\centering
\caption{Selection on $\lambda$}
\subfloat[KDD99]{
\begin{tabular}{|l|l|l|l|l|l|l|}
\hline
\multicolumn{1}{|l|}{} & \multicolumn{3}{c|}{ASPCA-FG}    & \multicolumn{3}{c|}{ASPCA-BG}    \\ \hline
$\lambda$ & $||V||_{1,1}$ & Variance & AUC   & $||V||_{1,1}$ & Variance & AUC   \\ \hline
0         & 97.33         & 21518    & 0.963 & 97.33         & 21518    & 0.963 \\
10        & 44.38         & 21524    & 0.963 & 44.59         & 21523    & 0.963 \\
50        & 43.52         & 21627    & 0.962 & 43.97         & 21589    & 0.964 \\
100       & 42.77         & 21873    & 0.960 & 43.04         & 21735    & 0.964 \\
500       & 41.04         & 23673    & 0.956 & 40.67         & 22997    & 0.967 \\ \hline
\end{tabular}
}
\subfloat[Breast-Cancer]{
\begin{tabular}{|l|l|l|l|l|l|l|}
\hline
\multicolumn{1}{|l|}{} & \multicolumn{3}{c|}{ASPCA-FG}    & \multicolumn{3}{c|}{ASPCA-BG}    \\ \hline
$\lambda$              & $||V||_{1,1}$ & Variance & AUC   & $||V||_{1,1}$ & Variance & AUC   \\ \hline
0                      & 34.23         & 1.2728   & 0.959 & 34.23         & 1.2728   & 0.959 \\
1                      & 26.68         & 14.2012  & 0.903 & 14.95         & 6.2777   & 0.950 \\
5                      & 16.50         & 20.8308  & 0.963 & 12.31         & 20.2968  & 0.982 \\
10                     & 12.81         & 30.8123  & 0.985 & 10            & 57.0009  & 0.966 \\
50                     & 10            & 57.0009  & 0.966 & 10            & 57.0009  & 0.966 \\ \hline
\end{tabular}
}
	\label{Table:parameter:lambada}

\end{table*}

\section{Conclusions and Future Work}

Traditional PCA-based anomaly detection models are not suitable for anomaly interpretation, limiting its usage in the domains where interpretation is essential.  In this paper, we found that the sparsity and orthogonality of the loading vectors are the keys to anomaly interpretation, and proposed an interpretable PCA-based anomaly detection model, the ASPCA model. We designed \emph{forward} and \emph{backward} ASPCA models and evaluated them on two real world datasets. Our model achieved similar or even better anomaly detection performance as the traditional PCA model, and provided meaningful interpretation for individual anomalies. Our future works will focus on three directions: 1) how to improve efficiency on high dimensional datasets; 2) how to extend our model to robust PCA for better detection performance; 3) how to extend our model to kernel PCA.

\section{Acknowledgement}
This work is funded by the National 863 Program of China (Grant No. 2015AA01A301).

\bibliographystyle{abbrv}
\small
\bibliography{main}

\end{document}